\newtheorem{theorem}{Theorem} % [section]
\newtheorem{proposition}[theorem]{Proposition}
\newtheorem{lemma}[theorem]{Lemma}
\newtheorem{corollary}[theorem]{Corollary}
\newtheorem*{openproblem*}{Open Problem}
\theoremstyle{definition}
\newtheorem*{remark*}{Remark}
\newtheorem*{example*}{Example}
\newcommand{\ZZ}{\mathbb{Z}}
\newcommand{\EGZ}{\mathrm{EGZ}}
\newcommand{\pt}{\hspace{1.5pt}\begin{picture}(-1,1)(-1,-3)\circle*{2.5}\end{picture}\hspace{3pt}}
\newcommand{\vpt}{\hspace{1.5pt}\begin{picture}(-1,1)(-1,-3)\circle{2.5}\end{picture}\hspace{3pt}}
\newcommand{\ptS}{S^{\pt}}
\newcommand{\vptS}{S^{\vpt}}
\newcommand{\ptT}{T^{\pt}}
\newcommand{\Sym}{\mathrm{Sym}}
\begin{document}

\title{Counting tournament score sequences}

\author{Anders Claesson \and
  Mark Dukes \and
  Atli Fannar Franklín \and
  Sigurður Örn Stefánsson
}
\date{12 January 2022}
\maketitle

\begin{abstract}
  The score sequence of a tournament is the sequence of the out-degrees
  of its vertices arranged in nondecreasing order. The problem of
  counting score sequences of a tournament with $n$ vertices is more
  than 100 years old (MacMahon 1920). In 2013 Hanna conjectured a
  surprising and elegant recursion for these numbers. We settle this
  conjecture in the affirmative by showing that it is a corollary to our
  main theorem, which is a factorization of the generating function for
  score sequences with a distinguished index. We also derive a
  closed formula and a quadratic time algorithm for counting score sequences.
\end{abstract}
\thispagestyle{empty}

\section{Introduction}\label{section-intro}

In 1953 Landau~\cite{Landau1953} used oriented complete graphs---also
called \emph{tournaments}---to model pecking orders.  If the vertices of
the complete graph represent players (rather than chickens), then the
initial vertex of a directed edge signifies the winner of a game between
the two end-point players. The number of wins of a player is equal to
the number of outgoing edges from that vertex. A \emph{score sequence} is
a sequence of these number of wins given in a nondecreasing order. For
instance, with 3 players there are two possible score sequences, namely
$(0, 1, 2)$ and $(1, 1, 1)$.  Note that non-isomorphic tournaments may
give rise to the same score sequence. With $5$ players there are, up to
isomorphism, 12 tournaments but only 9 score sequences. To be even more
specific, here are two non-isomorphic\footnote{The longest directed path
  between any pair of vertices in the left-hand graph is 3, while in the
  right-hand graph there is a directed path of length $4$ from $c$ to $b$.}
tournaments:
\[
\begin{tikzpicture}[rotate=90, scale=0.95]
  \GraphInit[vstyle=Classic]
  \tikzset{VertexStyle/.append style={inner sep=2.6pt,outer sep=0.3pt,minimum size=2pt,fill=white}}
  \tikzset{EdgeStyle/.append style={semithick}}
  \Vertices[Math,Lpos=90,unit=1.1]{circle}{a,b,c,d,e}
  \Edge[lw=0.1cm,style={post}](a)(c)
  \Edge[lw=0.1cm,style={post}](a)(d)
  \Edge[lw=0.1cm,style={post}](b)(a)
  \Edge[lw=0.1cm,style={post}](b)(c)
  \Edge[lw=0.1cm,style={post}](b)(d)
  \Edge[lw=0.1cm,style={post}](c)(d)
  \Edge[lw=0.1cm,style={post}](d)(e)
  \Edge[lw=0.1cm,style={post}](e)(a)
  \Edge[lw=0.1cm,style={post}](e)(b)
  \Edge[lw=0.1cm,style={post}](e)(c)
\end{tikzpicture}\qquad
\begin{tikzpicture}[rotate=90, scale=0.95]
  \GraphInit[vstyle=Classic]
  \tikzset{VertexStyle/.append style={inner sep=2.6pt,outer sep=0.3pt,minimum size=2pt,fill=white}}
  \tikzset{EdgeStyle/.append style={semithick}}
  \Vertices[Math,Lpos=90,unit=1.1]{circle}{a,b,c,d,e}
  \Edge[lw=0.1cm,style={post}](a)(b)
  \Edge[lw=0.1cm,style={post}](a)(c)
  \Edge[lw=0.1cm,style={post}](a)(d)
  \Edge[lw=0.1cm,style={post}](b)(c)
  \Edge[lw=0.1cm,style={post}](b)(d)
  \Edge[lw=0.1cm,style={post}](b)(e)
  \Edge[lw=0.1cm,style={post}](c)(d)
  \Edge[lw=0.1cm,style={post}](d)(e)
  \Edge[lw=0.1cm,style={post}](e)(a)
  \Edge[lw=0.1cm,style={post}](e)(c)
\end{tikzpicture}
\]
The score sequence associated with both is $(1,1,2,3,3)$. The following
characterization of score sequences is known as Landau's theorem.
\begin{theorem}[Landau~\cite{Landau1953}]\label{landaus-thm}
  A sequence of integers $s=(s_0,\ldots,s_{n-1})$ is a score sequence if
  and only if
  \begin{enumerate}
  \item[\textup{(1)}] $\refstepcounter{equation} \label{landaus-thm-i} 0 \leq s_0 \leq s_1 \leq \cdots \leq s_{n-1} \leq n-1 $,
  \item[\textup{(2)}] $\refstepcounter{equation} \label{landaus-thm-ii} s_0+\dots+s_{k-1} \geq {k \choose 2}$ for $1\leq k < n$, and
  \item[\textup{(3)}] $\refstepcounter{equation} \label{landaus-thm-iii} s_0+\dots+s_{n-1}={n \choose 2}$.
  \end{enumerate}
\end{theorem}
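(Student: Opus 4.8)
The plan is to prove the two directions separately. Necessity is routine: if $s$ is the score sequence of a tournament $T$ on $n$ vertices, then (1) holds because each out-degree lies in $\{0,\dots,n-1\}$ and we have sorted them, (3) holds because the out-degrees sum to the number of arcs of $T$, namely $\binom{n}{2}$, and (2) holds because the $\binom{k}{2}$ arcs induced on the $k$ lowest-scoring vertices each contribute a win to one of those vertices, so their scores already total at least $\binom{k}{2}$.

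All the weight falls on sufficiency, which I would prove by strong induction on $n$, the case $n=1$ being the sequence $(0)$. Given $s$ satisfying (1)--(3), I would first ask whether some inequality in (2) is tight. If $\sum_{i<k}s_i=\binom{k}{2}$ for some $1\le k<n$, then the sequence splits: the bottom block $(s_0,\dots,s_{k-1})$ satisfies Landau's conditions on $k$ vertices, and the shifted top block $(s_k-k,\dots,s_{n-1}-k)$ satisfies them on $n-k$ vertices. The verification is pleasant --- using $\binom{k+j}{2}=\binom{k}{2}+kj+\binom{j}{2}$ and the tightness at $k$, the partial sums of the top block collapse exactly to the inequalities (2) for $s$, while $s_k\ge k$ and $s_{k-1}\le k-1$ fall out of (2) evaluated at $k+1$ and $k-1$. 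By the induction hypothesis both blocks are realizable, and orienting every arc between them from the top block to the bottom block glues the two tournaments into one with score sequence $s$.

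The hard case, and the one I expect to be the genuine obstacle, is when every inequality in (2) is strict, so nothing can be peeled off. Here I would try to balance the sequence: replace $(s_0,\dots,s_{n-1})$ by the re-sorted $(s_0+1,s_1,\dots,s_{n-2},s_{n-1}-1)$, a move that preserves (3) and only loosens (2), since every partial sum with $k<n$ gains a $+1$. The intent is that iterating this equalization forces an inequality in (2) to become tight, returning us to the previous case at the same $n$. The difficulty is termination: the natural potential $\sum_i s_i^2$ strictly decreases only while $s_{n-1}>s_0+1$, so the process stalls precisely on the near-regular sequences whose scores take two consecutive values. These I would have to realize by hand --- for example through an explicit rotational (circulant) tournament --- so that they serve as the true base cases of the induction.

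Finally, I would keep in reserve a cleaner route that sidesteps the casework altogether: encode the desired orientation of $K_n$ as an integral flow, with a source feeding each vertex $i$ through an arc of capacity $s_i$ and each edge of $K_n$ draining to a sink through an arc of capacity $1$. A flow of value $\binom{n}{2}$ exists if and only if the orientation does, and by the max-flow--min-cut theorem its existence is controlled by cut conditions whose binding cases are the sets of lowest-scoring vertices; these collapse to exactly (2) and (3). The work there migrates into verifying which cuts are binding, but it avoids the awkward near-regular base cases entirely.
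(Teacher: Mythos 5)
The paper itself gives no proof of this theorem---it is quoted from Landau with a citation---so your proposal can only be judged on its own merits. The necessity direction is fine, and your tight case is correct and complete in outline: it is precisely the paper's Lemma~\ref{direct-sum-lemma} (the direct-sum splitting, proved there for other purposes), including the right derivations of $s_k\geq k$ and $s_{k-1}\leq k-1$ from \eqref{landaus-thm-ii} at $k+1$ and $k-1$. The flow argument you keep in reserve is also a standard complete route (a feasible-flow/Gale--Hoffman style proof), though as written it is only a sketch.

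The strict case, however, has a genuine gap, and it is not the one you flagged. First, a directional confusion: your balancing move is a Robin Hood transfer, so the sorted prefix sums in \eqref{landaus-thm-ii} can only increase (your claim of a uniform $+1$ is not quite right---the gain can be $0$, as with $\{0,5,5\}\mapsto\{1,4,5\}$ at $k=2$, but it is never negative, by majorization). Consequently iterating the move can \emph{never} create a tight inequality and ``return us to the previous case''; the only terminal states are the near-regular sequences you mention, which are therefore not an awkward side case but the entire base of this branch. That much is repairable. The real missing step is the implication your induction silently uses: that realizability of the balanced sequence $s'$ yields realizability of $s$. Concretely, given a tournament with score sequence $s'$, you must transfer a win \emph{from} the vertex $a$ of score $s_0+1$ \emph{to} the vertex $b$ of score $s_{n-1}-1$, i.e.\ reverse the arcs of a directed path from $a$ to $b$ (this decreases $d^+(a)$ by one, increases $d^+(b)$ by one, and leaves interior scores unchanged). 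Since $d^+(b)\geq d^+(a)$, the usual out-neighborhood argument does not produce such a path, and in an arbitrary tournament it need not exist ($b$ could sit in a strong component dominating that of $a$). The fix is available but must be stated and proved: when all inequalities in \eqref{landaus-thm-ii} are strict, equality at some $k<n$ would force the $k$ lowest-scoring vertices to send no arcs outward, so in fact \emph{every} realization of such a sequence is strongly connected; since your move preserves strictness, every realization of $s'$ contains the required path from $a$ to $b$. Without this lemma the induction in the strict case does not close.
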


Let $S_n$ be the set of score sequences of length $n$. There is no known
closed formula for the associated cardinalities (A000571 in the OEIS~\cite{OEIS})
\[
  \bigl(|S_n|\bigr)_{n\geq 0} =
  (1,1,1,2,4,9,22,59,167,490,1486,4639,14805,\dots)
\]
or their generating function.

It should be noted that Landau was not the first person to study
score sequences, or attempt to count them. MacMahon~\cite{MacMahon1920}
used symmetric functions and hand calculations to determine $|S_n|$ for
$n\leq 9$ in 1920. Building on Landau's work, Narayana and
Bent~\cite{NarayanaBent1964}, in 1964, derived a multivariate recursive
formula for determining $|S_n|$. They used it to give a table for
$n\leq 36$. In 1968 Riordan~\cite{Riordan1968} gave a simpler and more
efficient recursion, but unfortunately it turned out to be incorrect~\cite{Riordan1969}.

Let $[a,b]$ denote the interval of integers $\{a, a+1,\dots,b\}$.  We
may view a score sequence $s\in S_n$ as an endofunction
$s:[0,n-1]\to [0,n-1]$.  We now introduce the notion of a \emph{pointed
  score sequence}. Define $\ptS_n$ as the Cartesian product
$\ptS_n=S_n\times [0,n-1]$. We call the members of $\ptS_n$
\emph{pointed score sequences}; e.g.\ there are 6 pointed score
sequences in $\ptS_3$:
\begin{align*}
  & ((0,1,2),0),\, ((0,1,2),1),\, ((0,1,2),2),\, \\
  & ((1,1,1),0),\, ((1,1,1),1),\, ((1,1,1),2).
\end{align*}
Let $(s,i)\in \ptS_n$. Depending on the
context, the element $i$ will be interpreted as a position
(element in the domain) or a  value (element in the
codomain) of $s$. If $i$ is a value, then the cardinality
of the fiber $s^{-1}(i)$ is the number of times $i$ occurs in $s$; this
number may be zero. Let
\[\ptS_n(t) = \sum_{(s,i)\in \ptS_n} t^{|s^{-1}(i)|}
\]
be the polynomial recording the distribution of the statistic
$(s,i)\mapsto |s^{-1}(i)|$ on $\ptS_n$. As an example,
$\ptS_3(t) = 2+3t+t^3$. Let
\[
\ptS(x,t) = \sum_{n\geq 1} \ptS_n(t) x^n.
\]

To present the bijection that is the main result of this paper,
we will first introduce a particular type of multiset that is an
essential ingredient in our deconstruction of a pointed score sequence.
At first glance it is not obvious what the relevance of these multisets
to score sequences is.

We define $\EGZ_n$ as the set of multisets of size $n$ with elements in
the cyclic group $\ZZ_n$ whose sum is $\binom{n}{2}$ modulo $n$. To
understand what the elements of $\EGZ_n$ look like it may be helpful to
note that $\binom{n}{2}$, as an element of $\ZZ_n$, is $0$ if $n$ is odd and $n/2$ if
$n$ is even. For instance, $\EGZ_3$ consists of the $4$
multisets
\begin{equation*}
  \{0,0,0\},\; \{0,1,2\},\; \{1,1,1\},\;\,\text{and}\,\;\{2,2,2\}.
\end{equation*}

The notation $\EGZ_n$ refers to the Erd\H{o}s-Ginzburg-Ziv
Theorem~\cite{EGZ1961}, which is stated below. Following it we give a
proposition motivating this terminology; its proof gives a simple
one-to-one correspondence between $\EGZ_n$ and the sets considered by
Erd\H{o}s, Ginzburg, and Ziv.

\begin{theorem}[Erd\H{o}s, Ginzburg, and Ziv~\cite{EGZ1961}]\label{EGZ-Theorem}
  Each set of $2n-1$ integers contains some subset of $n$ elements the
  sum of which is a multiple of $n$.
\end{theorem}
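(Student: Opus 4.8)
The plan is to reduce the statement to the case where $n$ is prime, and then to settle the prime case by a counting argument over $\mathbb{F}_p$. Call a subset of size $n$ whose elements sum to a multiple of $n$ a \emph{zero-sum} $n$-subset; I want to show every collection of $2n-1$ integers contains one.

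First I would establish \emph{multiplicativity}: if the theorem holds for $m$ and for $n$, then it holds for $mn$. Given $2mn-1$ integers, I repeatedly extract disjoint zero-sum $n$-subsets. Each extraction removes $n$ elements, and as long as at least $2n-1$ integers remain the $n$-case guarantees another extraction; starting from $2mn-1$ elements this yields $2m-1$ pairwise disjoint zero-sum $n$-subsets $A_1,\dots,A_{2m-1}$. Setting $b_j=\frac1n\sum_{a\in A_j}a\in\ZZ$, the $m$-case applied to $b_1,\dots,b_{2m-1}$ produces indices $j_1,\dots,j_m$ with $m\mid b_{j_1}+\cdots+b_{j_m}$; then $A_{j_1}\cup\cdots\cup A_{j_m}$ has $mn$ elements and sum $n(b_{j_1}+\cdots+b_{j_m})$, which is divisible by $mn$. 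Hence it suffices to prove the theorem for $n=p$ prime.

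For the prime case I would invoke the Chevalley--Warning theorem. Reducing the given integers $a_1,\dots,a_{2p-1}$ modulo $p$ and working over $\mathbb{F}_p$, introduce variables $x_1,\dots,x_{2p-1}$ and the two polynomials $f=\sum_i x_i^{p-1}$ and $g=\sum_i a_i x_i^{p-1}$. Their degrees sum to $2(p-1)=2p-2$, strictly less than the number $2p-1$ of variables, so Chevalley--Warning forces the number of common zeros in $\mathbb{F}_p^{\,2p-1}$ to be divisible by $p$; as the origin is one such zero, a nonzero common zero $x$ must exist. Writing $I=\{i:x_i\neq 0\}$ and using Fermat's little theorem ($x_i^{p-1}=1$ exactly when $x_i\neq0$), the equation $f(x)=0$ gives $|I|\equiv 0\pmod p$, while $g(x)=0$ gives $\sum_{i\in I}a_i\equiv0\pmod p$. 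Since $1\le|I|\le 2p-1$, the only multiple of $p$ in this range is $|I|=p$, so $\{a_i:i\in I\}$ is the desired zero-sum $p$-subset.

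The main obstacle is precisely the prime case; the multiplicative step is bookkeeping once the prime case is in hand. The subtlety there is pinning the subset size to exactly $n$: Chevalley--Warning (or, alternatively, the classical route of sorting the residues and applying the Cauchy--Davenport inequality to the $p-1$ two-element ``gap'' sets) easily yields a \emph{nonempty} zero-sum subset, but it is the constraint $1\le|I|\le 2p-1$ together with divisibility by $p$ that upgrades this to a subset of size precisely $p$. It is also worth noting that $2n-1$ is sharp: the multiset of $n-1$ zeros and $n-1$ ones has $2n-2$ elements and no zero-sum $n$-subset, so no smaller guarantee is possible.
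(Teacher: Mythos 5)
Your proof is correct, but there is no in-paper proof to compare it against: the paper quotes the Erd\H{o}s--Ginzburg--Ziv theorem as a known result from the literature and never proves it, since the theorem serves only to motivate the name $\EGZ_n$; the only statement the paper actually proves in this vicinity is the bijection between $\EGZ_n$ and the $n$-element subsets of $[1,2n-1]$ with sum divisible by $n$. Your argument is the classical two-step proof and both steps check out. The multiplicative reduction is sound: starting from $2mn-1$ integers, after extracting $2m-2$ disjoint zero-sum $n$-subsets exactly $2n-1$ elements remain, so one more extraction is possible, giving the $2m-1$ subsets $A_1,\dots,A_{2m-1}$ you need; the averaging to $b_j$ and the application of the $m$-case then work as written. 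The prime case is the standard Chevalley--Warning argument, and the crucial degree count $2(p-1)<2p-1$ together with $1\leq |I|\leq 2p-1$ correctly pins $|I|$ to exactly $p$ --- you rightly flag this as the heart of the matter. One point of hygiene: the theorem as stated in the paper speaks of a \emph{set} of $2n-1$ integers, but your induction genuinely runs in the multiset setting (the $b_j$ in the multiplicative step may coincide, and your sharpness example is a multiset), so you should state and prove the multiset version, which trivially implies the set version; the Chevalley--Warning step already handles repeated $a_i$ without change. Finally, your polynomial-method route differs from the original 1961 argument, which settled the prime case via sorted residues and what amounts to Cauchy--Davenport (the alternative you mention parenthetically); the Chevalley--Warning proof is shorter, at the cost of invoking a less elementary tool.
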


\begin{proposition}
  There is a one-to-one correspondence between $\EGZ_n$ and $n$-element
  subsets of $[1,2n-1]$ whose sum is a multiple of $n$.
\end{proposition}

\begin{proof}
  Let $A=\{a_1,\dots,a_n\}$ be a subset of $[1,2n-1]$ such that
  $a_1+\cdots+a_n$ is divisible by $n$.  Without loss of generality we can
  further assume that $a_1<a_2<\cdots <a_n$. Let $b_i=a_i-i$. We claim
  that $A\mapsto \{b_1,\ldots, b_n\}$ is a bijection onto $\EGZ_n$.
  Clearly, $i\leq a_i\leq n+i-1$ and hence $0\leq b_i\leq n-1$. In this manner we
  can consider each $b_i$ as an element of $\ZZ_n$. The sum
  $a_1+\cdots+a_n$ is divisible by $n$, by assumption, and hence
  \[b_1 + \dots + b_n
    \equiv a_1 + \dots + a_n - 1 - 2 - \dots - n
    \equiv - \binom{n + 1}{2} \pmod{n}.
  \]
  That this is congruent to $\binom{n}{2}$ modulo $n$ follows from
  $\binom{n}{2}+\binom{n + 1}{2}=n^2$.  Conversely, given a multiset
  $B=\{b_1,\dots,b_n\}$, with $b_1\leq\cdots\leq b_n$, in $\EGZ_n$ we
  define the set $A=\{a_1,\dots,a_n\}$ by $a_i=b_i+i$. One can verify
  that $A$ is a subset of $[1,2n-1]$ whose sum is divisible by $n$, and
  that $A\mapsto B$ is the inverse of the map above, but we omit the
  details.
\end{proof}

The sequence of cardinalities
\[\bigl(|\EGZ_n|)_{n\geq 1} = (1, 1, 4, 9, 26, 76, 246, 809, 2704, 9226, 32066, \dots)
\]
is entry A145855
in the OEIS~\cite{OEIS}. As recorded in that OEIS entry, Jovovi\'c
conjectured and Alekseyev~\cite{Alekseyev2008} proved in 2008 that
\begin{equation}\label{EGZ-cardinality}
  |\EGZ_n| = \frac{1}{2n}\sum_{d | n}(-1)^{n-d}\varphi(n/d)\binom{2d}{d},
\end{equation}
where the sum runs over all positive divisors of $n$ and $\varphi$ is
Euler's totient function.  A generalization of this result was given by
Chern~\cite{Chern2019} in 2019.

The zeros in a multiset $M \in \EGZ_n$ play a prominent role in our
construction. We now introduce a generating function to record their
number.
For a multiset $M\in\EGZ_n$ let $|M|_i$ be the number of occurrences of $i$ in
$M$. Furthermore, let
\[\EGZ_n(t) = \sum_{M\in\EGZ_n} t^{|M|_0}
\]
be the polynomial recording the distribution of zeros in multisets
belonging to $\EGZ_n$. For instance,
$\EGZ_3(t)=2+t+t^3$ (looking at the distribution of $1$s or
$2$s in $\EGZ_3$ would result in the same polynomial).
%
% The triangle formed by the coefficients of $\EGZ_n(t)$ for $n\leq 10$ is
% \[
% \begin{array}{cccccccccccccccc}
%   1\\
%   0& 1\\
%   0& 1\\
%   2& 1& 0& 1\\
%   4& 2& 2& 1\\
%   12& 7& 4& 2& 0& 1\\
%   34& 22& 10& 7& 2& 1\\
%   114& 66& 36& 18& 8& 3& 0& 1\\
%   380& 212& 118& 56& 28& 10& 4& 1\\
%   1274& 715& 380& 192& 88& 36& 14& 4& 0& 1\\
%   4364& 2438& 1280& 647& 296& 131& 48& 17& 4& 1
% \end{array}
% \]
%
Define the generating functions
\begin{equation*}
  \EGZ(x,t) = \sum_{n\geq 1}\EGZ_n(t) x^n \quad\text{and}\quad
  S(x) = \sum_{n\geq 0}|S_n| x^n.
\end{equation*}
Our main result (Theorem~\ref{main-result}) is a
factorization of the generating function for pointed score sequences:
\begin{equation}\label{main-eq}
  \ptS(x,t) = \EGZ(x,t)S(x).
\end{equation}
Let $(s,i)\in\ptS_n$. Viewing $i$ is an element of the codomain of $s$ we find
that $\ptS(x,0)$ consists of terms stemming from pairs $(s,i)$ such that
$s^{-1}(i)$ is empty; i.e.\ $i$ is outside the image of $s$. Thus,
$\ptS(x,1)-\ptS(x,0)$ counts pairs $(s,i)$ for which $i$ is in the image
of $s$. Let
\begin{align*}
  \vptS_n
  &= \{(s,i)\in\ptS_n: i\in\textrm{Im}(s)\} \\
  &= \{(s,i)\in\ptS_n: \text{$i=s_j$ for some $j\in[n]$}\}
\end{align*}
and let $\vptS(x)=\ptS(x,1)-\ptS(x,0)$ be the corresponding generating
function. For instance, $\vptS_3$ consists of the 4 elements
$((0,1,2),0)$, $((0,1,2),1)$, $((0,1,2),2)$, and $((1,1,1),1)$.  We will
show (in Corollary~\ref{catalan-conundrum}) that
\[
  \vptS(x) = xC(x)S(x),
\]
where $C(x)=(1-\sqrt{1-4x})/(2x)$ is the generating function for the
Catalan numbers $C_n=\binom{2n}{n}/(1+n)$. This striking occurrence of
the Catalan numbers was in fact the original inspiration for our work.
It was in the summer of 2019 that we experimented with score sequences
and conjectured the identity. Despite ample attempts we were for the
longest time unable to prove it.

By setting $t=1$ in Equation~\ref{main-eq} and noting that
$\ptS(x,1)=xS'(x)$ it follows that
\begin{equation}\label{main-eq-1}
  xS'(x)=\EGZ(x,1)S(x),
\end{equation}
a fact conjectured by Paul D.\ Hanna as recorded in the OEIS entry
A000571 in 2013. Equation~\ref{main-eq-1} may alternatively be written
$\bigl(\log S(x)\bigr)'=\EGZ(x,1)/x$ and so
\[
  S(x) \,=\, \exp\Biggl(\,\sum_{n\geq 1} \frac{|\EGZ_n|}{n} x^n\Biggr),
\]
which arguably is the most elegant way of expressing the relation
between $|S_n|$ and $|\EGZ_n|$. The
most efficient way of computing the numbers $|S_n|$ is, however, to use
the recursion underlying Equation~\ref{main-eq-1}. Namely,
 $|S_0|=1$ and, for $n\geq 1$,
\[
  |S_n| = \frac{1}{n}\sum_{k=1}^{n}|S_{n-k}||\EGZ_k|.
\]
See Corollary~\ref{recursion} and the discussion following it.

\section{The main theorem and its bijection}\label{section-main-thm}

Let the generating functions $\ptS(x,t)$, $\EGZ(x,t)$ and $S(x)$
be defined as in Section~\ref{section-intro}.

\begin{theorem}\label{main-result} We have
  \[\ptS(x,t) = \EGZ(x,t)S(x).
  \]
\end{theorem}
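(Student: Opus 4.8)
The plan is to prove the identity bijectively, by deconstructing a pointed score sequence into an $\EGZ$ multiset and a shorter score sequence. Comparing coefficients of $x^n$, the asserted factorization is equivalent to $\ptS_n(t)=\sum_{k=1}^{n}\EGZ_k(t)\,|S_{n-k}|$, so I would aim to build a bijection
\[
  \ptS_n \;\longleftrightarrow\; \bigsqcup_{k=1}^{n}\EGZ_k\times S_{n-k}
\]
under which a pointed score sequence $(s,i)$ whose distinguished value occurs $|s^{-1}(i)|$ times is sent to a pair $(M,s')$ with $M\in\EGZ_k$ and $|M|_0=|s^{-1}(i)|$. This matches the statistic $t^{|s^{-1}(i)|}$ on the left with $t^{|M|_0}$ on the right, which is exactly the statistic recorded by $\EGZ_k(t)$.

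First I would set up the block decomposition of score sequences. Call $k$ a \emph{breakpoint} of $s\in S_n$ if $s_0+\dots+s_{k-1}=\binom{k}{2}$; by Landau's conditions (2) and (3) the breakpoints cut $s$ into indecomposable blocks, and the $m$-th block occupies the value interval $[N_{m-1},N_m-1]$, where $N_m$ is the $m$-th breakpoint and $N_0=0$. These intervals partition $[0,n-1]$, so the distinguished value $i$ lies in the interval of a unique block $B_m$. I would then split $(s,i)$ at $k=N_m$: the prefix $p=(s_0,\dots,s_{k-1})$ is a score sequence of size $k$ in which $i$ lies in the \emph{last} block, while subtracting $k$ from the suffix yields a score sequence $s'\in S_{n-k}$. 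Since the ordinal (transitive) composition of a size-$k$ and a size-$(n-k)$ score sequence is again a score sequence, and the split point is recovered as the end of the block containing $i$, this is a bijection. Writing $T_k(t)=\sum t^{|p^{-1}(i)|}$ over such \emph{terminally pointed} score sequences $(p,i)$ of size $k$ and $T(x,t)=\sum_{k\ge1}T_k(t)x^k$, it yields $\ptS(x,t)=T(x,t)\,S(x)$, using that $|s^{-1}(i)|=|p^{-1}(i)|$ because $i$ occurs only in $B_m$.

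It then remains to prove $T_k(t)=\EGZ_k(t)$. For the forward map I would send a terminally pointed $(p,i)$ to the multiset $M=\{(p_j-i)\bmod k:0\le j<k\}$. Because $\sum_j p_j=\binom{k}{2}$ and reducing each entry by the constant $i$ changes the sum by $-ki\equiv0\pmod k$, the multiset $M$ lies in $\EGZ_k$; and an entry of $M$ equals $0$ exactly when $p_j=i$, so $|M|_0=|p^{-1}(i)|$ and the statistic is preserved. Composed with the previous step this reads off $M$ directly from $s$ as $\{(s_j-i)\bmod k:j<k\}$.

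The crux, and the step I expect to be the main obstacle, is showing this map is a bijection, i.e.\ that every $M\in\EGZ_k$ has a \emph{unique} preimage. Equivalently, among the $k$ cyclic shifts $M+i$ $(i\in\ZZ_k)$ there is exactly one for which $p=\mathrm{sort}\big((M+i)\bmod k\big)$ is a score sequence with $i$ in its last block. Requiring the correct total $\binom{k}{2}$ already forces $i$ to sit at a prescribed level of the integer walk $h(i)=i-|\{v\in M: v\ge k-i\}|$, whose increments are $1-|M|_{k-1-i}$ and which closes up after $k$ steps; Landau's inequality (2) and the last-block condition must then single out one such shift. I would establish this uniqueness by a cycle-lemma (Dvoretzky--Motzkin) argument applied to the associated lattice path: cyclic shifts of $M$ correspond to rotations of the path, and exactly one rotation is a nonnegative excursion returning to $0$ with its distinguished marker in the final block. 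Verifying that \emph{terminally pointed} is precisely the condition selecting that rotation is the delicate point.
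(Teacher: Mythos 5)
Your construction is, step for step, the paper's own proof in outline. Your ``breakpoints'' are the cut points of the unique decomposition into strong summands (Lemma~\ref{direct-sum-lemma}); your split of $(s,i)$ at the block containing $i$ is exactly the paper's bijection $(s,i)\mapsto\bigl((u,i),v\bigr)$ onto pairs from $L_k\times S_{n-k}$, where $L_k$ is the set of score sequences pointed in their last strong summand; and your map $(p,i)\mapsto\{(p_j-i)\bmod k : 0\le j<k\}$ is precisely the paper's $f(p,i)=\mu(p-i)$ of Lemma~\ref{last-strong-component}. Your verifications that $M\in\EGZ_k$ and that $|M|_0=|p^{-1}(i)|=|s^{-1}(i)|$ are correct, and your value-interval reading of ``$i$ in the last block'' agrees with the paper's positional one, since the $m$-th block occupies the same interval of positions as of values. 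Up to this point the argument is sound.

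The theorem, however, lives entirely in the step you leave open, and the cycle lemma alone cannot close it. Your walk has increments $1-|M|_{k-1-i}$ summing to zero, and for a zero-sum cycle Dvoretzky--Motzkin/Raney does not single out one rotation: the number of shifts of $M$ that sort to a score sequence equals the number of strong summands of that sequence. For instance, all three shifts of $M=\{0,1,2\}\in\EGZ_3$ sort to $(0,1,2)=(0)\oplus(0)\oplus(0)$; only the marker condition, with $i=2$, cuts this down to one preimage. So uniqueness must come from a structural fact like the paper's Lemma~\ref{prelude}/Lemma~\ref{lemmaeight}: for $s=t_1\oplus\cdots\oplus t_k$, the shift $s+m$ is a score sequence precisely when $m$ is a suffix length $|t_\ell|+\cdots+|t_k|$, in which case the summands rotate cyclically; granting this, two valid marked shifts $i<j$ of the same $M$ force $m=j-i\le (k-1)-(k-|t_k|)=|t_k|-1<m$, a contradiction. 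That lemma is a genuine Landau-inequality computation you have not supplied. On the existence side you likewise still owe what the paper's Lemma~\ref{surjectivity-helper} spends half its proof on: a shift preserves the exact integer sum $\binom{k}{2}$ only at indices where the walk returns to its level, and one must first show by a monovariant argument (the sum drops by $k$ per suitable shift) that some shift attains sum exactly $\binom{k}{2}$ before Raney applies, then rotate summands (Lemma~\ref{lemmaeight} again) to bring the marker into the last block. In short, you have rediscovered the paper's decomposition and its key map exactly, but the decisive claim---each $M\in\EGZ_k$ has exactly one terminally pointed preimage---remains unproven in your write-up, as you yourself flag.
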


We shall give a combinatorial proof of Theorem~\ref{main-result} using a
bijection
\[
  \Phi: \ptS_n \to \bigcup_{k=1}^n \EGZ_k\times S_{n-k}
\]
that maps a pointed score sequence to a pair consisting of a multiset
and a score sequence.  A property of this bijection is that, for
$(M,v)=\Phi(s,i)$, the number of occurrences of $i$ in $s$ is equal to
the multiplicity of zero in $M$. Before defining $\Phi$ we need to
introduce several necessary concepts.

A nonempty directed graph is said to be \emph{strongly connected} if there is a
directed path between each pair of vertices of the graph. Note that we do not
consider the empty graph to be strongly connected. A \emph{strong
score sequence} is one which stems from a strongly connected
tournament. Equivalently (see Harary and
Moser~\cite[Theorem~9]{HararyMoser1966}), $s = (s_0,\dots,s_{n-1})$, with $n\geq 1$, is a
strong score sequence if the inequality \eqref{landaus-thm-ii} of
Theorem~\ref{landaus-thm} is always strict; that is,
$s_0+\dots+s_{k-1}>{k \choose 2}$ for $1\leq k < n$.  Let us define the
\emph{direct sum} of two score sequences $u\in S_k$ and $v\in S_{\ell}$
by $u\oplus v = uv'$, where $v'$ is obtained from $v$ by adding $k$ to
each of its letters and juxtaposition indicates concatenation.  For
instance, $(0)\oplus (0)\oplus (1,1,1)=(0,1,3,3,3)$.  If $U$ and $V$ are
tournaments having score sequences $u$ and $v$, one may view the direct
sum $u\oplus v$ as the score sequence of the tournament where arrows are
placed between the vertices of $U$ and $V$ such that they all point
towards $U$:
\begin{center}
  \includegraphics[height=2.3em]{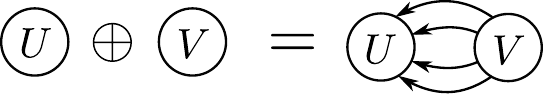}
\end{center}
This may easily be seen to be
independent of the choice of tournaments.

\begin{lemma}\label{direct-sum-lemma}
  Let $s\in S_n$. If $s_0+\cdots +s_{k-1} = \binom{k}{2}$ for some
  $k<n$, then $u=(s_0,\dots,s_{k-1})$ and $v=(s_k-k, \dots, s_{n-1}-k)$
  are both score sequences, and $s=u\oplus v$.
\end{lemma}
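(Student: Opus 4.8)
The plan is to verify directly that $u$ and $v$ each satisfy the three conditions of Landau's theorem (Theorem~\ref{landaus-thm}). Once this is done, the identity $s=u\oplus v$ is immediate: by definition $u\oplus v$ is the concatenation of $u$ with the sequence obtained from $v$ by adding $k$ to every letter, and that sequence is precisely $(s_k,\dots,s_{n-1})$, so $u\oplus v=(s_0,\dots,s_{n-1})=s$.

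For $u=(s_0,\dots,s_{k-1})$, conditions (2) and (3) come essentially for free: the partial-sum inequalities required for $u$ form a subset of those already known for $s$, and the total sum equals $\binom{k}{2}$ by hypothesis. The only point needing a short argument is the upper bound $s_{k-1}\le k-1$ in condition (1). Here I would isolate the last letter, writing $s_{k-1}=\binom{k}{2}-(s_0+\cdots+s_{k-2})$, and bound the subtracted partial sum below using condition (2) of Theorem~\ref{landaus-thm} at index $k-1$; this yields $s_{k-1}\le\binom{k}{2}-\binom{k-1}{2}=k-1$, with the case $k=1$ being trivial.

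For $v=(v_0,\dots,v_{n-k-1})$ with $v_j=s_{k+j}-k$, monotonicity and the upper bound $v_{n-k-1}\le n-k-1$ transfer directly from the corresponding facts for $s$. The substantive content, and the only place I expect real work, is the chain of partial-sum inequalities. The idea is to express a partial sum of $v$ through partial sums of $s$: for $1\le j\le n-k$, using the hypothesis $s_0+\cdots+s_{k-1}=\binom{k}{2}$,
\[
  v_0+\cdots+v_{j-1} = \bigl(s_0+\cdots+s_{k+j-1}\bigr)-\binom{k}{2}-jk.
\]
Applying condition (2) of Theorem~\ref{landaus-thm} to $s$ at index $k+j$ (or condition (3) when $k+j=n$) together with the key algebraic identity $\binom{k+j}{2}-\binom{k}{2}=kj+\binom{j}{2}$ then collapses the right-hand side to at least $\binom{j}{2}$, which is condition (2) for $v$. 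Specializing to $j=1$ recovers the lower bound $v_0\ge 0$, and taking $j=n-k$ turns the inequality into the exact equality $\binom{n-k}{2}$ demanded by condition (3).

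The main obstacle, such as it is, is purely bookkeeping: faithfully tracking the shift by $k$ and confirming the identity $\binom{k+j}{2}-\binom{k}{2}=kj+\binom{j}{2}$, which is exactly what makes the extra constants cancel so cleanly. With that identity in hand, every Landau condition for both $u$ and $v$ is deduced from a single condition for $s$, and no genuinely new inequality ever has to be proved.
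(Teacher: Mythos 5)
Your proposal is correct and follows essentially the same route as the paper: verify Landau's three conditions directly for $u$ and $v$, using the partial-sum identity $v_0+\cdots+v_{j-1}=(s_0+\cdots+s_{k+j-1})-\binom{k}{2}-jk$ and the cancellation $\binom{k+j}{2}-\binom{k}{2}=kj+\binom{j}{2}$, exactly as in the paper's computation. In fact you are slightly more thorough on one point the paper dismisses as ``clear from Landau's theorem,'' namely the upper bound $s_{k-1}\le\binom{k}{2}-\binom{k-1}{2}=k-1$ needed for condition (1) of $u$.
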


\begin{proof}
  That $u$ is a score sequence is clear from Landau's theorem. By
  definition we have $s=u\oplus v$, so it only remains to show that $v$
  is a score sequence and we will use Landau's theorem to do so, proving
  each of the three parts separately:
  \begin{enumerate}
    \item[\eqref{landaus-thm-i}] Since $s_0+\cdots+s_{k-1} =
      \binom{k}{2}$ and $s_0+\cdots +s_{k} \geq \binom{k+1}{2}$ we have
      $s_k\geq \binom{k+1}{2} - \binom{k}{2} = k$ and hence
      $v_0=s_k-k\geq 0$. Since $s$ is weakly increasing it is clear that
      $v$ is weakly increasing as well. Moreover, the length of $v$ is
      $n-k$ and $v_{n-k-1}=s_{n-1}-k \leq n-1-k$.
    \item[\eqref{landaus-thm-ii}] For $1 \leq j < n-k$ we have
      \begin{align*}
        v_0 + v_1 + \cdots + v_{j-1}
        &= s_k + s_{k+1} + \cdots s_{k+j-1} - jk \\
        &= s_0 + \cdots + s_{k+j-1} - (s_0 + \cdots + s_{k-1}) - jk \\
        &\geq \binom{k+j}{2}-\binom{k}{2}-jk = \binom{j}{2}.
      \end{align*}
    \item[\eqref{landaus-thm-iii}] Similarly,
      $v_0 + v_1 + \cdots + v_{n-k-1} = \binom{n}{2}-\binom{k}{2}-(n-k)k
      = \binom{n-k}{2}$.
  \end{enumerate}
  This concludes the proof.
\end{proof}

A direct consequence of Lemma~\ref{direct-sum-lemma}, above, is that every score sequence
$s$ can be uniquely written as a direct sum
$s=t_1\oplus t_2\oplus\cdots\oplus t_k$ of nonempty strong score sequences;
in this context, the $t_i$ will be called the \emph{strong summands} of
$s$. In terms of underlying tournaments we have the picture:
\[
  \includegraphics[height=3.4em]{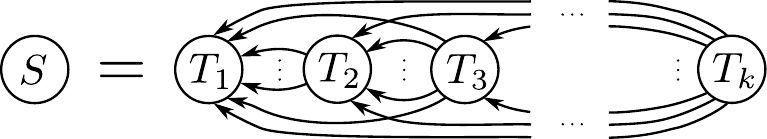}
\]

We are now almost in a position to define the promised map $\Phi$, but
first a couple of definitions. Assume that we are given a score sequence
$s=(s_0,s_1,\dots,s_{n-1})\in S_n$.
\begin{itemize}
\item For any integer $j$, let $s+j$ denote the sequence
  obtained by adding $j$ to each element of $s$, reducing modulo $n$,
  and sorting the outcome in nondecreasing order.  Note that $s+j$ need
  not be a score sequence even though $s$ is. E.g.\ $s=(1,1,1)$ is a
  score sequence, but $s+1=(2,2,2)$ is not. On the other hand, if
  $s=(0,1,2)$ then $s+1=s$ is a score sequence. A characterization of
  when $s+j$ is a score sequence will be given in Lemma~\ref{prelude}.

\item Let $\mu(s+j)$ denote the multiset $\{s_0+j,s_1+j,\dots,s_{n-1}+j\}$ with
  elements in the cyclic group $\ZZ_n$.
\end{itemize}

Given a pointed score sequence $(s,i)\in \ptS_n$, write
$s = t_1\oplus t_2\oplus\cdots\oplus t_k$ and
let $j$ be the smallest index such that $i < |t_1\oplus\cdots\oplus t_j|$.
Another way to define $j$ is as the smallest prefix $t_1\oplus \cdots \oplus t_j$
of strong summands of $s$ that begins $s_0,s_1,\ldots,s_{i}$.
Define the two score sequences $u$ and $v$ by
\[
  u = t_1\oplus\cdots\oplus t_j \quad\text{and}\quad
  v = t_{j+1}\oplus\cdots\oplus t_k.
\]
Finally, we let
\begin{equation*}
  \Phi(s, i) = \bigl(\mu(u-i), v\bigr).
\end{equation*}
As an example, consider the score sequence
$s = (0, 2, 2, 3, 3, 5, 7, 7, 7)$; its decomposition into strong
summands is $s = (0)\oplus (1,1,2,2)\oplus (0)\oplus (1,1,1)$. With
$i=3$ we get $u=(0)\oplus (1,1,2,2) = (0,2,2,3,3)$,
$v=(0)\oplus (1,1,1)=(0,2,2,2)$, $u-3 = (0,0,2,4,4)$ and so
$\Phi(s, 3)= \bigl(\{0,0,2,4,4\}, (0,2,2,2)\bigr)$.

\section{Proof of the main result}

Our aim is to prove Theorem~\ref{main-result}, but first we need to
establish a number of lemmas. Let $T_n$ be the set of strong score
sequences of length $n$.

\begin{lemma}\label{lemmasix}
  For any strong score sequence $s\in T_n$, the $n$ multisets
  \[\mu(s+j) = \{s_0 + j, s_1 + j, \dots, s_{n-1} + j\},\quad\text{for $j\in [0,n-1]$},
  \]
  are all distinct.
\end{lemma}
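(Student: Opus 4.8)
The plan is to argue by contradiction. For $n=1$ there is nothing to prove, so assume $n\ge 2$ and suppose two of the multisets coincide, say $\mu(s+j)=\mu(s+j')$ with $0\le j<j'\le n-1$. Since $\mu(s+j)$ is nothing but the multiset $\mu(s)=\{s_0,\dots,s_{n-1}\}\subseteq\ZZ_n$ translated by $j$, the hypothesis reads $\mu(s)=\mu(s)+d$ in $\ZZ_n$, where $d=j'-j\in[1,n-1]$. Writing $N(v)=|\{i:s_i=v\}|$ for the multiplicity of $v\in\ZZ_n$ in $\mu(s)$, this says $N(v)=N(v+d)$ for every $v$, so $N$ is invariant under the cyclic subgroup generated by $d$, hence under translation by $g=\gcd(d,n)$. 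Thus $N$ is $g$-periodic, where $g$ is a divisor of $n$ with $1\le g<n$.

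Next I would turn this periodicity into information about the prefix sums of $s$. Write $n=pg$ and, for $r\in[0,g-1]$, set $m_r=N(r)$; since each of the $g$ residue classes modulo $g$ contributes $p$ copies for a total of $n$ entries, we get $\sum_{r=0}^{g-1}m_r=g$. Consequently, for every $c\in[0,p]$ there are exactly $cg$ entries of $s$ that are smaller than $cg$, so the sorted sequence breaks into $p$ consecutive blocks $s_{(c-1)g},\dots,s_{cg-1}$ (for $c=1,\dots,p$), the $c$-th block having value-multiset $\{(c-1)g+r:\text{with multiplicity }m_r\}$. In particular the prefix sum at $k=g$ is $s_0+\dots+s_{g-1}=\sum_{r=0}^{g-1}r\,m_r$, a quantity I will call $S_1$.

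Finally I would pin down $S_1$ from the global constraint. Summing the $p$ block-sums and invoking $s_0+\dots+s_{n-1}=\binom n2$ gives a single linear equation in $S_1$ whose unique solution is $S_1=\binom g2$; equivalently, the function $k\mapsto (s_0+\dots+s_{k-1})-\binom k2$ is linear along the multiples of $g$ and vanishes at $k=n$, so it already vanishes at $k=g$. Either way $s_0+\dots+s_{g-1}=\binom g2$ with $1\le g<n$, which contradicts the strictness of Landau's inequality \eqref{landaus-thm-ii} enjoyed by the strong score sequence $s$, and this contradiction proves the lemma. The computations of the last two paragraphs are routine; the one genuine point—and the main obstacle to finding a clean argument—is the reduction of an \emph{arbitrary} coincidence of multisets to invariance under a shift by a \emph{divisor} $g\mid n$, since it is precisely the divisibility that lets $g$-periodicity of $N$ cooperate with the total-sum condition to force the forbidden equality $s_0+\dots+s_{g-1}=\binom g2$.
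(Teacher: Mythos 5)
Your proof is correct, and it takes a genuinely different route from the paper's. The paper also reduces to the case $\mu(s+j)=\mu(s)$, but then argues directly on sorted sequences: since $s$ is nondecreasing, sorting the list $s_0+j,\dots,s_{n-1}+j$ after reduction modulo $n$ must produce the cyclic shift $s_k+j-n,\dots,s_{n-1}+j-n,s_0+j,\dots,s_{k-1}+j$; comparing total sums pins the shift down to $j=n-k$, and the resulting termwise identities $s_i=s_{k+i}-k$ combine with strongness ($s_0+\cdots+s_{k-1}>\binom{k}{2}$) to give $s_0+\cdots+s_{n-k-1}<\binom{n-k}{2}$, violating Landau's condition \eqref{landaus-thm-ii}. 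You instead pass to the multiplicity function $N$ and use the group-theoretic fact that invariance of $N$ under translation by $d$ in $\ZZ_n$ forces invariance under $g=\gcd(d,n)$, a proper divisor of $n$; the $g$-periodicity of $N$ then makes the rest mechanical, and your computations do check out: $\sum_{r=0}^{g-1}m_r=g$, the sorted sequence splits into $p$ blocks of length $g$, and the total-sum constraint forces the exact equality $s_0+\cdots+s_{g-1}=\binom{g}{2}$, contradicting strictness. Your route buys conceptual cleanliness — no bookkeeping with wrap-around indices, and as a bonus the prefix deficit $(s_0+\cdots+s_{cg-1})-\binom{cg}{2}=c\bigl(S_1-\binom{g}{2}\bigr)$ vanishes at \emph{every} multiple of $g$, so by Lemma~\ref{direct-sum-lemma} any score sequence with a nontrivially shift-invariant multiset splits as a direct sum of $p$ blocks of length $g$, a slightly stronger structural statement. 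The paper's route buys sharper information about the shift itself: it identifies $j=n-k$ and the termwise equalities $s_i=s_{k+i}-k$, which is in the spirit of what Lemma~\ref{prelude} later establishes about when $s+j$ is again a score sequence. Your closing remark is also accurate: the reduction to a divisor $g\mid n$ is the one genuinely nonroutine step, since divisibility is exactly what lets the periodic block structure interact with the total sum $\binom{n}{2}$ to force the forbidden equality at $k=g$.
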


\begin{proof}
  Assume $j\in [0,n-1]$ is such that $\mu(s+j)$ and
  $\mu(s)$ are equal as multisets over $\ZZ_n$. Note that there is no
  loss of generality here: assuming that $\mu(s+j_1)=\mu(s+j_2)$
  is equivalent to assuming that $\mu(s+j)=\mu(s)$ with $j=j_2 - j_1$.
  Let
  us write the values $s_0 + j, \dots, s_{n-1} + j$ in nondecreasing order
  after reducing modulo $n$. Since $s_0 \leq \cdots \leq s_{n-1}$ the
  result must be a cyclic shift of the original order, say, $s_k +
  j,\dots,s_{n-1}+j, s_0+j, \dots, s_{k-1}+j$ for some index $k$. As
  each $s_i$ is less than $n$ and $j<n$ we find that those elements must
  equal
  \begin{equation}\label{sorted-list}
    s_k + j-n,\,\dots,\,s_{n-1}+j-n,\, s_0+j,\, \dots,\, s_{k-1}+j.
  \end{equation}
  Since we are assuming that $\mu(s+j)$ and $\mu(s)$
  are equal as multisets over $\ZZ_n$, the sum
  of the elements listed in \eqref{sorted-list} must equal the
  sum of the elements $s_0+s_1+\cdots+s_{n-1}=\binom{n}{2}$.  This
  implies $nj-(n-k)n=0$, which gives $j=n-k$.  Thus, the sequence
  \eqref{sorted-list} becomes
  \[s_k-k,\,\dots,\,s_{n-1}-k,\, s_0+n-k,\, \dots,\, s_{k-1}+n-k.
  \]
  By assumption we have $s_0=s_k-k$, $s_1=s_{k+1}-k$, and so on;
  thus
  \begin{align*}
    s_0 + \cdots + s_{n-k-1}
    &= s_k-k + \cdots + s_{n-1}-k \\
    &= \binom{n}{2} - (s_0 + \cdots + s_{k-1}) - (n-k)k \\
    &= \binom{k}{2} + \binom{n-k}{2} - (s_0 + \cdots + s_{k-1}).
  \end{align*}
  Suppose $k\geq 1$. Since our score sequence $s$ is strong we have $s_0
  + \cdots + s_{k-1} > \binom{k}{2}$, but then $s_0 + \cdots + s_{n-k-1}
  < \binom{n-k}{2}$ contradicting that $s$ is a score sequence. The only
  remaining possibility is that $k=0$ and $j=0$, which concludes the
  proof.
\end{proof}

Consider the following example concerning the strong summands of a score
sequence $s+j$. Let $s = (1, 1, 2, 2, 4, 6, 7, 7, 7, 8)$
and $j = 5$. We add $5$ to each element of $s$ to get the list of numbers $6$, $6$,
$7$, $7$, $9$, $11$, $12$, $12$, $12$, $13$, which when reduced modulo
$|s|=10$ reads $6$, $6$, $7$, $7$, $9$, $1$, $2$, $2$, $2$, $3$; finally
we sort this list to arrive at $s+5=(1, 2, 2, 2, 3, 6, 6, 7, 7, 9)$.
Note that $s$ and $s+5$ share the same strong summands only arranged
differently:
\begin{align*}
  s &= (1, 1, 2, 2)\oplus (0)\oplus (1, 2, 2, 2, 3);\\
  s+5 &= (1, 2, 2, 2, 3)\oplus (1, 1, 2, 2)\oplus (0).
\end{align*}
As is detailed in the following lemma, this is not a
coincidence. See also Fig.~\ref{f:flip} where we show how this may be
interpreted in terms of tournaments.
\begin{figure}[ht]
  \centering
  \includegraphics[height=4.8em]{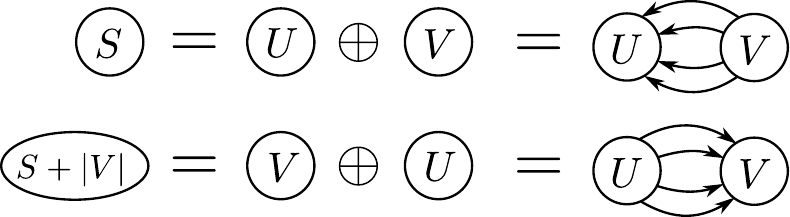}
  \caption{If $s=u\oplus v$, one may view $s+|v|$ as the score sequence
    of a tournament obtained by flipping the direction of all arrows
    pointing out of $V$ towards $U$. Here, $S,U,V$ are tournaments with
    score sequences $s,u,v$ respectively.} \label{f:flip}
\end{figure}    

\begin{lemma}\label{prelude}
  Let $s$ be any nonempty score sequence of length $n$ and let
  $j\in [1,n-1]$.  Then $s+j$ is a score sequence if and only if there
  are score sequences $u$ and $v$ with $|v| = j$, such that
  $s = u\oplus v$. In that case, $s + j = v \oplus u$.
\end{lemma}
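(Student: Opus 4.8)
The plan is to prove both implications, treating the reverse direction as a direct computation and the forward direction via Landau's partial-sum conditions applied to $s+j$.

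For the reverse implication, suppose $s = u \oplus v$ with $|u| = n-j$ and $|v| = j$. I would write $s$ explicitly as the concatenation of $u$ with the entries of $v$ each increased by $n-j$, and then add $j$ modulo $n$ to every entry. The $n-j$ entries coming from $u$ lie in $[0,n-j-1]$, so they become $u_i + j \in [j,n-1]$ without wrapping; the $j$ entries coming from $v$ lie in $[n-j,n-1]$, so adding $j$ wraps them back to $v_\ell \in [0,j-1]$. Sorting therefore places the $v$-block first, giving exactly $s+j = v\oplus u$. Since $u$ and $v$ are score sequences, so is $v\oplus u$, which establishes that $s+j$ is a score sequence and simultaneously proves the final assertion $s+j = v\oplus u$.

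For the forward implication, assume $s+j$ is a score sequence and let $m$ be the number of entries of $s$ that are strictly less than $n-j$; these are exactly the entries that do not wrap upon adding $j$. Since $s$ is nondecreasing, the reduced-and-sorted sequence $w = s+j$ consists of the $n-m$ wrapped entries $s_{m}+j-n,\dots,s_{n-1}+j-n \in [0,j-1]$ followed by the $m$ unwrapped entries $s_0+j,\dots,s_{m-1}+j \in [j,n-1]$. Summing all entries of $w$ gives $\binom{n}{2} + nj - n(n-m)$, and since $w$ is a score sequence this sum equals $\binom{n}{2}$ by \eqref{landaus-thm-iii}; hence $m = n-j$, so precisely $j$ entries wrap and the split in $s$ falls between positions $n-j-1$ and $n-j$.

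It then remains to show that $s$ splits as claimed, and here Lemma~\ref{direct-sum-lemma} reduces everything to proving $P := s_0 + \cdots + s_{n-j-1} = \binom{n-j}{2}$. Applying \eqref{landaus-thm-ii} to $s$ at $k = n-j$ gives $P \geq \binom{n-j}{2}$. For the reverse inequality I would apply \eqref{landaus-thm-ii} to $w$ at $k = j$: the first $j$ entries of $w$ are the wrapped ones, whose sum is $\big(\binom{n}{2}-P\big) + j^2 - nj$, and requiring this to be at least $\binom{j}{2}$ yields $P \leq \binom{n}{2} - \binom{j}{2} + j^2 - nj$. The arithmetic identity $\binom{n}{2} - \binom{j}{2} + j^2 - nj = \binom{n-j}{2}$ then forces $P = \binom{n-j}{2}$, and Lemma~\ref{direct-sum-lemma} produces $u = (s_0,\dots,s_{n-j-1})$ and $v = (s_{n-j}-(n-j),\dots,s_{n-1}-(n-j))$ with $s = u\oplus v$ and $|v| = j$. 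The main obstacle is the forward direction, and within it the bookkeeping of the wrap structure: one must correctly verify that exactly $j$ entries wrap and that the relevant partial sum of $w$ is the complementary sum $\binom{n}{2}-P$ (suitably shifted), so that Landau's inequality on $w$ converts into the matching upper bound on $P$.
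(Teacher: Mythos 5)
Your proposal is correct and follows essentially the same route as the paper's proof: the reverse direction by direct computation of which entries wrap under adding $j$ modulo $n$, and the forward direction by using Landau's condition \eqref{landaus-thm-iii} on $s+j$ to pin the wrap count at exactly $j$, then squeezing the prefix sum $s_0+\cdots+s_{n-j-1}$ between $\binom{n-j}{2}$ (from \eqref{landaus-thm-ii} applied to $s$) and $\binom{n-j}{2}$ (from \eqref{landaus-thm-ii} applied to $s+j$ at $k=j$), and finally invoking Lemma~\ref{direct-sum-lemma}. The paper packages the two inequalities as a single chain showing $K=\binom{j}{2}$, but the content is identical to your argument.
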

\begin{proof}
  Let $s = (s_0,s_1,\ldots,s_{n-1})$. Define the sequences
  \begin{align*}
    u &= (s_0,s_1,\ldots,s_{n-j-1}); \\
    v &= (s_{n-j} + j - n, s_{n-j+1} + j - n, \ldots, s_{n-1} + j -n)
  \end{align*}
  of length $n-j$ and $j$, respectively. Assume that $u$ and $v$ are
  score sequences. Clearly, $s = u \oplus v$. Since the length of $v$ is
  $j$ we find that
  \begin{align*}
    v\oplus u = (s_{n-j} + j - n, \ldots, s_{n-1} + j
    -n,s_0+j,s_1+j,\ldots,s_{n-j-1}+j).
  \end{align*}
  If we consider the elements of this sequence
  modulo $n$ we may add $n$ to the first $j$ of them. If we then
  sort the elements in nondecreasing order we obtain the sequence
  $(s_0+j,\ldots,s_{n-1}+j)$. Thus, $s+j = v\oplus u$,
  which is a score sequence. For the other direction, assume that $s+j$
  is a score sequence.  Then by definition of $s+j$ there is an $\ell
  \in [0,n-1]$ such that $s+j$ is
  \begin{equation*}
     (s_\ell + j-n,s_{\ell+1}+j-n,\ldots,s_{n-1}+j-n,s_0+j,s_1+j,\ldots,s_{\ell-1}+j).
  \end{equation*}
  Since $s+j$ is a score sequence of length $n$, we find by item~\eqref{landaus-thm-iii} in
  Landau's theorem that
  \begin{align*}
    \binom{n}{2}
    &= \sum_{i=\ell}^{n-1}(s_i+j-n)+\sum_{i=0}^{\ell-1}(s_i+j) \\
    &= \sum_{i=0}^{n-1} s_i + (n-\ell)(j-n) + \ell j
    \,= \binom{n}{2} + n(\ell - n + j).
  \end{align*}
  Thus, $\ell = n-j$. Now consider the sum of the first $j$ terms
  in $s+j$; let us call this sum $K$. By Landau's theorem we have
  \begin{align*}
    \binom{j}{2} \leq K
    &= \sum_{i=n-j}^{n-1} (s_i+j-n)\\
    &= \sum_{i=n-j}^{n-1} s_i + j(j-n) \\
    &= \binom{n}{2} - \sum_{i=0}^{n-j-1}s_i + j(j-n) \\
    & \leq \binom{n}{2} - \binom{n-j}{2}+ j(j-n) \,=\, \binom{j}{2}.
  \end{align*}
  In particular, the two inequalities above are in fact equalities and
  $K = \binom{j}{2}$.  From Lemma \ref{direct-sum-lemma} we deduce that
  $u$ and $v$ as previously defined are score sequences and that
  $s+j = v\oplus u$. As before, it is clear that $s = u\oplus v$.
\end{proof}

The previous lemma may be equivalently stated as follows:

\begin{lemma}\label{lemmaeight}
  Let $s=t_1\oplus t_2\oplus\cdots\oplus t_k$ be any nonempty score
  sequence decomposed into its strong summands, and let $j\in [1,n-1]$.
  Then $s+j$ is a score sequence if and only if there is an $\ell$ such
  that
  $$j = |t_\ell| + |t_{\ell+1}| + \cdots + |t_k|
  $$
  in which case
  $s+j = t_\ell \oplus t_{\ell+1} \oplus \cdots \oplus t_k \oplus t_1 \oplus \cdots \oplus t_{\ell-1}$.
\end{lemma}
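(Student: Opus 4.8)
The plan is to read this off Lemma~\ref{prelude} by translating the condition "there are score sequences $u,v$ with $|v| = j$ and $s = u \oplus v$" into a statement purely about the strong summands $t_1,\dots,t_k$. Lemma~\ref{prelude} already tells us that $s+j$ is a score sequence precisely when such a decomposition exists, and that in that case $s+j = v \oplus u$. So the entire task reduces to characterizing which lengths $j \in [1,n-1]$ can occur as the size of a right summand $v$ in a direct-sum decomposition $s = u \oplus v$, and then identifying what $u$ and $v$ must be.

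The key observation I would establish is that the admissible right-summand lengths are exactly the nontrivial suffix sums $|t_\ell| + \cdots + |t_k|$. For the forward direction, suppose $s = u \oplus v$ with $u,v$ nonempty score sequences. Writing each of $u$ and $v$ in terms of its own strong summands and concatenating yields a decomposition of $s$ into nonempty strong score sequences; strong-connectedness of each piece is intrinsic and is unaffected by placing the pieces into the direct sum. By the uniqueness of the strong-summand decomposition noted as a consequence of Lemma~\ref{direct-sum-lemma}, this concatenated decomposition must coincide with $t_1 \oplus \cdots \oplus t_k$. Hence the boundary between $u$ and $v$ falls exactly between two consecutive summands, say $u = t_1 \oplus \cdots \oplus t_{\ell-1}$ and $v = t_\ell \oplus \cdots \oplus t_k$, giving $j = |v| = |t_\ell| + \cdots + |t_k|$. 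Conversely, for any $\ell$ the sequences $t_1 \oplus \cdots \oplus t_{\ell-1}$ and $t_\ell \oplus \cdots \oplus t_k$ are score sequences, so each such suffix sum is realizable; the requirement $j \in [1,n-1]$ then pins down exactly the indices $\ell \in [2,k]$, which are precisely those making both $u$ and $v$ nonempty.

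Finally I would record the explicit form of $s+j$. When $j = |t_\ell| + \cdots + |t_k|$, Lemma~\ref{prelude} gives $s+j = v \oplus u = (t_\ell \oplus \cdots \oplus t_k) \oplus (t_1 \oplus \cdots \oplus t_{\ell-1})$, which by associativity of $\oplus$ equals $t_\ell \oplus \cdots \oplus t_k \oplus t_1 \oplus \cdots \oplus t_{\ell-1}$, as claimed. The only step with genuine content is the forward implication, i.e.\ verifying that an arbitrary split $s = u \oplus v$ respects the strong-summand boundaries; the hard part there is simply invoking uniqueness of the strong decomposition correctly, and everything else is bookkeeping that rests on Lemma~\ref{prelude} and the associativity of the direct sum.
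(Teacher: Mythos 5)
Your proof is correct and takes essentially the same route as the paper: the paper gives no separate argument, introducing Lemma~\ref{lemmaeight} only as an equivalent restatement of Lemma~\ref{prelude}, and your proposal supplies exactly the implicit translation---matching an arbitrary split $s = u \oplus v$ to a boundary between consecutive strong summands via the uniqueness of the strong decomposition (the stated consequence of Lemma~\ref{direct-sum-lemma}), then reading off $s+j = v \oplus u$. Your explicit treatment of the admissible indices $\ell \in [2,k]$ and the associativity of $\oplus$ is precisely the bookkeeping the paper leaves to the reader, so nothing is missing.
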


Our next lemma will be essential for proving that the mapping $f$ defined in
Lemma~\ref{last-strong-component}, below, is surjective.

\begin{lemma}\label{surjectivity-helper}
  For all multisets $M\in\EGZ_n$ there is a score sequence $s\in S_n$
  and a constant $j\in[0,n-1]$ such that $\mu(s+j)=M$.
\end{lemma}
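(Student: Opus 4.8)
The plan is to recognize the lemma as a surjectivity statement: the map sending a pair $(s,j)$, with $s\in S_n$ and $j\in[0,n-1]$, to the multiset $\mu(s+j)$ is claimed to hit every element of $\EGZ_n$. The elements of $\mu(s+j)$ sum to $\binom n2+nj\equiv\binom n2\pmod n$, so the map does land in $\EGZ_n$ and only surjectivity needs proof. Fixing $M\in\EGZ_n$ and writing it in nondecreasing order as $m_0\le\cdots\le m_{n-1}$ with each $m_i\in[0,n-1]$, I want to produce one shift for which the sequence obtained from $M$ by subtracting $j$ modulo $n$ from each element and re-sorting is a score sequence $s$ satisfying $\mu(s+j)=M$. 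This is a cycle-lemma phenomenon, and I would isolate the correct shift by an extremal (minimum-of-a-potential) choice.

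First I would set up the potential. Write the prefix sums $Q_p=m_0+\cdots+m_{p-1}$ and, using $\sum_i m_i\equiv\binom n2\pmod n$, put $c=\bigl(\sum_i m_i-\binom n2\bigr)/n\in\ZZ$. Define
\[
  \phi(p)=Q_p-\binom p2-cp,\qquad p\in[0,n],
\]
whose forward difference is $\phi(p+1)-\phi(p)=m_p-p-c$ and which satisfies $\phi(0)=\phi(n)=0$. I then let $p^\ast\in\{0,\dots,n-1\}$ minimize $\phi$, set $j^\ast=p^\ast+c$, and take $s$ to be the cyclic rotation
\[
  s=(m_{p^\ast}-j^\ast,\dots,m_{n-1}-j^\ast,\;m_0+n-j^\ast,\dots,m_{p^\ast-1}+n-j^\ast),
\]
where the second block is empty when $p^\ast=0$. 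The minimality of $\phi$ at $p^\ast$ gives the neighbour inequalities $\phi(p^\ast)\le\phi(p^\ast+1)$ and $\phi(p^\ast)\le\phi(p^\ast-1)$ (at the boundary $p^\ast=0$ the latter is read as $\phi(n)\le\phi(n-1)$ via $\phi(0)=\phi(n)$), and these translate into $s_0\ge0$ and $s_{n-1}\le n-1$, so that $s$ is weakly increasing with all entries in $[0,n-1]$; this is Landau's condition \eqref{landaus-thm-i}.

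The heart of the argument is a telescoping identity: for every $1\le k\le n$ the $k$-th prefix sum $S_k$ of $s$ satisfies $S_k-\binom k2=\phi\bigl((p^\ast+k)\bmod n\bigr)-\phi(p^\ast)$. For $k\le n-p^\ast$ this is a direct rewriting using the difference of $\phi$ together with $\binom{p^\ast+k}{2}-\binom{p^\ast}{2}=kp^\ast+\binom k2$. Since $p^\ast$ minimizes $\phi$, the right-hand side is nonnegative, which is \eqref{landaus-thm-ii}, while at $k=n$ it equals $\phi(p^\ast)-\phi(p^\ast)=0$, which is \eqref{landaus-thm-iii}; hence $s\in S_n$. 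Finally $\mu(s+(j^\ast\bmod n))=M$ because adding $j^\ast$ back modulo $n$ recovers each $m_i$ and reducing $j^\ast$ modulo $n$ does not change $s+j$, so $j=j^\ast\bmod n\in[0,n-1]$ is the required shift.

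I expect the main obstacle to be the wrap-around range $n-p^\ast<k\le n$ of the telescoping identity, where the rotated prefix includes the ``$+n$'' block: verifying $S_k-\binom k2=\phi\bigl(k-(n-p^\ast)\bigr)-\phi(p^\ast)$ there is a slightly delicate polynomial cancellation, and it is precisely the step where the normalization $\phi(0)=\phi(n)=0$---which makes the potential genuinely cyclic---is essential.
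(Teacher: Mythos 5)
Your proof is correct---I checked the delicate wrap-around case you flag, and the identity $S_k-\binom{k}{2}=\phi\bigl(k-(n-p^\ast)\bigr)-\phi(p^\ast)$ does hold (with $q=n-p^\ast$ it reduces to $\binom{n}{2}-qp^\ast-\binom{q}{2}-\binom{r}{2}=\binom{p^\ast}{2}-\binom{r}{2}$, which follows from Vandermonde-type expansion of $\binom{p^\ast+q}{2}$)---but your route differs genuinely from the paper's in execution, even though both are at heart cycle-lemma arguments. The paper proceeds in two separate stages: it first shows the sum of $M$ can be normalized to exactly $\binom{n}{2}$ by repeated shifts, each of which changes the integer sum by $\pm n$; then, with the sum fixed, it encodes $M$ dually by the counting sequence $y_i$ (number of elements at most $i$), sets $w_i=y_i-i-1$, shows that sum-preserving shifts act as cyclic rotations of $(w_i)$, and invokes Raney's lemma as a black box to find a rotation with non-positive prefix sums, which it shows is equivalent to Landau's condition~\eqref{landaus-thm-ii}. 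You instead work directly with prefix sums of the sorted values, and your correction term $cp$ in $\phi(p)=Q_p-\binom{p}{2}-cp$ absorbs the sum normalization, collapsing the paper's two stages into a single extremal choice: taking $p^\ast=\arg\min\phi$ is the standard constructive proof of the cycle lemma, so you prove the needed instance in-line rather than citing it, and you verify all three of Landau's conditions directly (the neighbour inequalities at $p^\ast$ give~\eqref{landaus-thm-i}, the telescoping identity gives~\eqref{landaus-thm-ii} and~\eqref{landaus-thm-iii}). What the paper's version buys is an explicit link to Raney's lemma and bookkeeping via $(w_i)$ that meshes with the zero-counting statistic used elsewhere in the paper; what yours buys is brevity, self-containment, and an explicit formula $j=(p^\ast+c)\bmod n$ for the shift, which would also be convenient algorithmically.
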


\begin{proof}
  Let $M\in\EGZ_n$. By definition of $\EGZ_n$, the sum of the members of
  $M$ is $\binom{n}{2}$ modulo $n$.  We start by showing that for some
  integer $j$ the members of the multiset $M + j = \{x+j: x\in M\}$ will
  have sum exactly $\binom{n}{2}$ (without reducing the sum modulo
  $n$). Suppose that the sum of the members of $M$ is greater than
  $\binom{n}{2}$. We will show that we can always choose a $j$ such that
  $M + j$ has a smaller sum. For $k\in[0, n - 1]$, let $y_k$ be the
  number of elements $x\in M$ such that $x\leq k$.  Suppose that
  $y_k > k$ for all $k$.  Then $y_0 \geq 1$ and there is at least one
  zero in $M$.  Similarly, $y_1 \geq 2$ so in addition to that zero
  there is a value equal to at most one.  Continuing like this we bound
  our values from above by $0, 1, \dots, n - 1$.  The sum of these
  values is, however, at most $\binom{n}{2}$. Since we assumed that the
  sum is greater than $\binom{n}{2}$ we conclude that the assertion that
  $y_k > k$, for all $k$, is false. For the remainder of the argument,
  let $k$ be the smallest index such that $y_k \leq k$.

  If $k=0$, then there are no zeroes in $M$, so subtracting one from all
  the elements in $M$ simply causes the sum to drop by $n$ since no
  modulo reductions occur. Thus, we can assume $k > 0$.  Then
  $k - 1 < y_{k - 1} \leq y_k \leq k$ and hence $y_k=k$.  We now
  subtract $k + 1$ from all the elements in $M$. Exactly $y_k$ of these
  values will be below zero.  So we decrease the sum by $n(k + 1)$ but
  reducing modulo $n$ adds $kn$ to the sum.  The combined effect is to
  decrease the sum by $n$.  In this way we can always decrease the sum
  down to $\binom{n}{2}$. The proof that the sum be can be increased in
  the same way is nearly identical.

  Let $M=\{x_1,\ldots,x_n\} \in \EGZ_n$ with $x_1 \leq \dots \leq x_n$.
  In light of the last two paragraphs, we can assume that
  $x_1+\cdots+x_n=\binom{n}{2}$.  The sequence $(x_1,\ldots,x_n)$
  satisfies items $\eqref{landaus-thm-i}$ and $\eqref{landaus-thm-iii}$
  of Landau's theorem. To prove that $(x_1,\ldots,x_n)$ is a score
  sequence it remains to prove item $\eqref{landaus-thm-ii}$,
  namely that $x_1 + \dots + x_k \geq \binom{k}{2}$ for $k\in [1,n-1]$.
  Suppose that
  $x_1 + \dots + x_k < \binom{k}{2}$ for some $k$ in $[1,n-1]$.  For
  now, let $k$ be the largest such $k$. Since $k$ is maximal we have
  $x_1 + \dots + x_{k+1} \geq \binom{k+1}{2}$, which gives us
  $x_{k+1} \geq \binom{k+1}{2} - x_1 - \dots - x_k > \binom{k+1}{2} -
  \binom{k}{2} = k$.  If $x_k \geq k$ then we can decrement $k$
  until this no longer holds true; note that
  $x_1 + \dots + x_{k-1} < \binom{k - 1}{2}$ will continue to hold while
  $x_k \geq k$ and $x_1 + \dots + x_k < \binom{k}{2}$.  We eventually arrive
  at a $k$ such that $x_1 + \dots + x_k < \binom{k}{2}$, $x_k < k$ and
  $x_{k+1} \geq k$.
  The values $x_1, \dots, x_k$ are thus precisely the
  values among $x_1, \dots, x_n$ that are smaller than $k$.
  Define $y_i$ as before and let $w_i = y_i - i - 1$.
  The number of values equal to $r$ among $x_1, \dots, x_k$ is
  $y_r - y_{r-1}$ as long as $r < k$, and so
  \[
    x_1 + \dots + x_k
    = 0\cdot y_0 + 1\cdot (y_1-y_0) + \dots + (k-1)(y_{k-1} - y_{k-2}).
  \]
  This implies
  $(k-1)y_{k-1} - y_{k-2} - \dots - y_0 = x_1 + \dots + x_k <
  \binom{k}{2}$, but we know that $y_{k-1} = k$, so we must have
  $k^2 - \binom{k}{2} < y_{k-1} + \dots + y_0$.  Rewriting in terms of
  $w_k$ we get
  $\binom{k + 1}{2} < w_0 + \dots + w_{k-1} + 1 + 2 + \dots + k$, which
  is equivalent to $w_0 + \dots + w_{k-1} > 0$.  Thus, if we can choose
  a shift (i.e.\ choose a constant to add to the elements of $M$) such
  that the prefix sums of the $w_i$ are non-positive, then the original
  set must be a score sequence.

  Consider the values $k$ such that $w_k = 0$ or, equivalently, $y_k =
  k + 1$.  If we subtract $k + 1$ from every value in our multiset, then exactly
  $k + 1$ of them will become negative.  Thus, our sum decreases by $n(k
  + 1)$ but the modulo reduction adds $n(k + 1)$ back to the sum, so it
  remains unchanged.  Let us consider what this does to our sequences
  $(y_i)$ and $(w_i)$.  Denote the new sequences after the subtraction by
  $(y_i')$ and $(w_i')$.  Consider the value of $y_i'$. It counts the number
  of values from $0$ to $i$ after the subtraction, which are values from $k +
  1$ to $k + 1 + i$ modulo $n$ before the subtraction.
  We consider two cases. 

  (a) If $k + 1 + i < n$, then $y_i' = y_{k+1+i} - y_k$.
  Since $y_k = k + 1$ this gives us in terms of $w_i$ that $w_i' + i + 1
  = w_{k+i+1} + k + i + 1 + 1 - k - 1$, which simplifies to $w_i' =
  w_{k+i+1}$.

  (b) If $k + 1 + i \geq n$, then
  $y_i' = y_{n - 1} - y_k + y_{k + 1 + i - n}$. We already know that $y_k = k + 1$ and
  $y_{n - 1} = n$.  In terms of the $w_i$ we have
  $w_i' + i + 1 = n - k - 1 + w_{k+1+i-n}+k+1+i-n+1$, which reduces to $w_i' = w_{k+1+i-n}$.

  Hence, in both cases we have $w_i' = w_{k+i+1}$
  when indices are considered modulo $n$.  We can thus cyclically
  permute the sequence $(w_i)$ as long as the final value continues to
  be $0$ and still have it correspond to a shift of our original multiset
  $M$ such that the shift has sum $\binom{n}{2}$.

  By the same argument as above (when $k=n$) we have
  \[
    x_1 + \dots + x_n = (n - 1)y_{n - 1} - y_{n - 1} - y_{n - 2} - \dots - y_0.
  \]
  Furthermore, $y_{n-1} = n$ and $x_1 + \dots + x_n = \binom{n}{2}$. Thus,
  $y_0 + \dots + y_{n-1} = \binom{n}{2}$, which gives us
  $w_0 + \dots + w_{n-1} = 0$.  Hence, we can consider the sequence of sums
  of values between the
  zeroes that appear in $(w_i)$. This new sequence $(g_i)$ also has sum
  zero and we can cyclically permute the sequence $(w_i)$ in the way we
  want if and only if we can permute the sequence $(g_i)$ to satisfy the same
  desired property.  This is because either all values between two
  zeroes in $(w_i)$ are of the same sign, or the positive values all
  come after the negative ones.  The reason for this is that one cannot
  go from positive values to negative without intercepting zero since we
  decrease by at most one at a time.

  A sequence of integers with sum zero can be cyclically permuted to
  produce a sequence with non-positive prefix sums.  To see this let
  $z_1,\ldots,z_n$ be integers that sum to zero.  By applying Raney’s
  lemma~\cite[\S 7.5, p.\ 345]{Concrete} to the sequence
  $-(z_1-1),-z_2,\ldots,-z_n$ there exists a cyclic permutation of this
  sequence having all positive prefix sums. Negate this new sequence and
  you have one with all negative prefix sums. Then add 1 to the element
  where $z_1 - 1$ ended up. This will increase any given prefix sum by
  at most 1, taking them from negative to at most non-positive, which
  proves the non-positive prefix sums claim.
  This allows us to conclude that the prefix sums, as discussed above,
  are non-positive and so the original set must be a score sequence,
  which completes our proof.
\end{proof}

Let $T(x)$ be the generating function for the number of strong score
sequences according to length. Note that any nonempty score sequence $s$
can be written $s=u\oplus t$, where $u$ is a score sequence and $t$ is
the last strong summand of $s$, and thus $S(x) = 1 + S(x)T(x)$, or,
equivalently, $S(x)=(1-T(x))^{-1}$.  Let
\[\ptT_n = T_n \times [0,n-1]
\]
be the set of \emph{pointed strong score sequences} of length $n$, and let
\[
  \ptT_n(t) = \sum_{(s,i)\in \ptT_n} t^{|s^{-1}(i)|}\quad\text{and}\quad
  \ptT(x,t) = \sum_{n\geq 1} \ptT_n(t) x^n.
\]

\begin{lemma}\label{last-strong-component}
  Assume $n\geq 1$. Select those pointed score sequences $(s,i)$ in
  $\ptS_n$ whose distinguished position is in the last strong summand,
  and denote the resulting set $L_n$. That is,
  \[L_n=\bigl\{(s,i)\in\ptS_n: s=t_1\oplus \cdots \oplus t_k \mbox{ and } i\in [n-|t_k|,n-1]\bigr\}.
  \]
  Then the mapping $f:L_n\to \EGZ_n$ defined by
  \[ f(s,i) = \mu(s-i)
  \]
  is a bijection. Moreover, if $M = f(s,i)$, then
  $|M|_0 = |s^{-1}(i)|$.  In terms of generating functions we have
  \[ \EGZ(x,t) = S(x)\ptT(x,t).
  \]
\end{lemma}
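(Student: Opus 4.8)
The plan is to prove the three assertions in turn — that $f$ is well defined and statistic-preserving, that it is a bijection, and finally the generating-function identity — with the bijection being the substantive part. For the easy observations: for $(s,i)\in L_n$ the multiset $\mu(s-i)$ has $n$ elements of $\ZZ_n$ with sum $\sum_p (s_p-i)=\binom{n}{2}-ni\equiv\binom{n}{2}\pmod n$, so $\mu(s-i)\in\EGZ_n$ and $f$ is well defined; and since every $s_p$ and $i$ lie in $[0,n-1]$ we have $s_p-i\equiv 0\pmod n$ iff $s_p=i$, whence $|M|_0=|s^{-1}(i)|$. It remains to show $f$ is a bijection. The key reduction is that a pair $(s,i)$ satisfies $\mu(s-i)=M$ exactly when $s=\mathrm{sort}(M+i)$ and this is a score sequence; so I would reduce everything to the single counting statement that for each $M\in\EGZ_n$ there is exactly one $i\in[0,n-1]$ with $s:=\mathrm{sort}(M+i)\in S_n$ and $i$ in the last strong summand of $s$.

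Next I would describe all realizing pairs. By Lemma~\ref{surjectivity-helper} there is at least one index $i_0$ with $s:=\mathrm{sort}(M+i_0)\in S_n$; write $s=t_1\oplus\cdots\oplus t_k$. For any other $i$, $\mathrm{sort}(M+i)=s+(i-i_0)$ in the shift notation, so by Lemma~\ref{lemmaeight} it is a score sequence precisely when $i-i_0\equiv\Sigma_\ell:=|t_\ell|+\cdots+|t_k|\pmod n$ for some $\ell$, in which case it is the cyclic rotation $t_\ell\oplus\cdots\oplus t_k\oplus t_1\oplus\cdots\oplus t_{\ell-1}$. Hence the realizing pairs are exactly $(\rho_\ell,i_\ell)$ with $i_\ell\equiv i_0+\Sigma_\ell\pmod n$ for $\ell=1,\dots,k$; these $k$ indices are distinct even when some rotations $\rho_\ell$ coincide (as happens for periodic block structures), so this coincidence does not affect the count of pairs.

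The crux, and the step I expect to be the main obstacle, is to show that exactly one of these $k$ pairs places its index in the last strong summand. The last summand of $\rho_\ell$ is $t_{\ell-1}$ (indices cyclic, $t_0=t_k$) and occupies positions $[\,n-|t_{\ell-1}|,\,n-1\,]$, so $i_\ell$ lies in it iff $i_0\equiv i_\ell-\Sigma_\ell$ lies in the translated arc $J_\ell\subset\ZZ_n$, which for $\ell\ge 2$ is $[\,n-\Sigma_{\ell-1},\,n-1-\Sigma_\ell\,]$ (using $|t_{\ell-1}|=\Sigma_{\ell-1}-\Sigma_\ell$) and which for $\ell=1$ wraps around as $[\,n-|t_k|,\,n-1\,]$. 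A short computation shows that $J_1,\dots,J_k$ are consecutive arcs partitioning $\ZZ_n$: the left endpoint $n-\Sigma_\ell$ of $J_{\ell+1}$ is one more than the right endpoint $n-1-\Sigma_\ell$ of $J_\ell$, and their lengths $|t_{\ell-1}|$ sum to $n$. Therefore the single residue $i_0$ lies in exactly one $J_\ell$, yielding a unique realizing pair in $L_n$; this is essentially the cycle lemma, and the only delicate point is verifying that the arcs tile $\ZZ_n$ irrespective of any periodicity of the block structure. Combined with the nonemptiness from Lemma~\ref{surjectivity-helper}, this proves $f$ is a statistic-preserving bijection, so $\sum_{(s,i)\in L_n}t^{|s^{-1}(i)|}=\EGZ_n(t)$.

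Finally, for the generating-function identity I would factor $L_n$ through the strong-summand decomposition. Every $(s,i)\in L_n$ is written uniquely as $s=u\oplus t$ with $t$ the last strong summand, $u\in S_{n-m}$ (where $m=|t|$, and $u$ possibly empty) and $i\in[n-m,n-1]$; putting $i'=i-(n-m)\in[0,m-1]$ gives a bijection $L_n\to\bigsqcup_{m=1}^n S_{n-m}\times\ptT_m$. Since all values of $u$ are below $n-m\le i$, the value $i$ occurs in $s$ only inside the $t$-block, at shifted value $i'$, so $|s^{-1}(i)|=|t^{-1}(i')|$ and the statistic is preserved. Comparing with $f$ then gives $\EGZ_n(t)=\sum_{(s,i)\in L_n}t^{|s^{-1}(i)|}=\sum_{m=1}^n|S_{n-m}|\,\ptT_m(t)$, and summing against $x^n$ over $n\ge 1$ yields $\EGZ(x,t)=S(x)\ptT(x,t)$.
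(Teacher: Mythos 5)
Your proposal is correct, and it rests on the same two pillars as the paper's proof---Lemma~\ref{surjectivity-helper} for existence of a realizing shift and Lemma~\ref{lemmaeight} for the rotation structure of valid shifts---and your final generating-function step (factoring $(s,i)\in L_n$ as $(u,(t,i'))$ with $t$ the last strong summand) is identical to the paper's, except that you spell out the statistic transfer $|s^{-1}(i)|=|t^{-1}(i')|$, which the paper leaves implicit. Where you genuinely diverge is in how bijectivity is established. The paper proves injectivity and surjectivity separately: injectivity by contradiction (two preimages $(u,i)$, $(v,j)$ force $m=j-i$ to be both at least $|t_\ell|+\cdots+|t_k|$ and at most $|t_\ell|-1$, with the case $u=v$ disposed of via Lemma~\ref{lemmasix}), and surjectivity by a procedural argument (start from the pair given by Lemma~\ref{surjectivity-helper} and repeatedly rotate the last strong summand to the front, decreasing the shift, until the index lands in the last summand). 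You instead parameterize the entire fiber of $f$ over $M$ at once---the valid shifts are exactly the $k$ residues $i_\ell \equiv i_0+\Sigma_\ell$, which you correctly note are distinct even when rotations coincide---and then show that the last-summand conditions translate into $k$ consecutive arcs $J_1,\dots,J_k$ of lengths $|t_k|,|t_1|,\dots,|t_{k-1}|$ that tile $\ZZ_n$, so that $i_0$ lies in exactly one of them. This cycle-lemma-style tiling yields existence and uniqueness in one stroke, makes it transparent that each fiber of $f$ restricted to $L_n$ is a singleton, and as a bonus renders Lemma~\ref{lemmasix} unnecessary (the periodic case $u=v$ is absorbed by the distinctness of the residues $i_\ell$); the trade-off is that you must verify the endpoint bookkeeping of the arcs, including the wrap-around arc $J_1$ and the degenerate case $k=1$, which your computation with $|t_{\ell-1}|=\Sigma_{\ell-1}-\Sigma_\ell$ handles correctly, whereas the paper's two-half argument avoids any such bookkeeping at the cost of a separate injectivity proof and an iterative surjectivity procedure whose termination is only sketched.
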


\begin{proof}
  % Injectivity
  We start by proving that $f$ is injective. To that end, assume that
  $(u,i)$ and $(v,j)$ in $L_n$ are such that $f(u,i) = f(v,j)$; that is,
  \[\mu(u-i) = \mu(v-j) = M,\quad  \text{where $M\in \EGZ_n$}.
  \]
  If $u=v$, then $i=j$ by Lemma~\ref{lemmasix}, so we may assume that
  $u$ and $v$ are different score sequences.  Without loss of generality
  we may further assume that $i\leq j$. Now, $\mu(u-i) = \mu(v-j)$ holds
  by assumption, and this multiset identity is true if and only if
  $\mu(u) = \mu(v+(j-i))$.  Thus, $u=v+m$ with $m=j-i$.  Assume that the
  decomposition of $v$ into its strong summands is
  $v = t_1 \oplus \cdots \oplus t_k$. By
  Lemma~\ref{lemmaeight}, the score sequences $u$ and $v$ contain the
  same strong summands up to a cyclic permutation.  To be more precise,
  there is an $\ell\geq 1$ such that
  \[
    m=|t_{\ell}|+\cdots+|t_k|\quad\text{and}\quad
    u = t_{\ell+1} \oplus \cdots\oplus t_k \oplus t_1 \oplus \cdots \oplus t_{\ell}.
  \]
  By definition of the set $L_n$ we have $i \in [n-|t_{\ell}|,n-1]$ and
  $j \in [n-|t_{k}|,n-1]$.  Therefore
  $m=j-i \leq n-1-(n-|t_{\ell}|) = |t_{\ell}|-1$.  This would however imply
  $|t_{\ell}|+\cdots+|t_k|< |t_{\ell}|$, which is impossible.
  Therefore no such $i$ and $j$ can exist and the mapping $f$ is
  injective.

  % Surjectivity
  Next we prove surjectivity.  We want to prove that every $M\in\EGZ_n$
  is the image of some score sequence $s \in S_n$ with a distinguished
  element $i$ in its last strong summand, i.e.\ $(s,i)\in L_n$. By
  Lemma~\ref{surjectivity-helper} every $M \in \EGZ_n$ is the image of some such $(s, j)$
  where we place no restriction on $j$. If $j$ is smaller
  than the size of the last strong summand of $s$, then $f$
  maps the pointed score sequence $(s,n-j)$ to our multiset $M$.
  If $j$ is greater than the size of the last strong summand of $s$,
  then we can move the last summand to the front and decrease $j$ by
  the corresponding size by Lemma~\ref{lemmaeight}. In this way
  we will eventually end up in the first case. Thus, we end up with a
  score sequence that maps to our multiset $M$.

  We have shown that $f$ is a bijection. Assume that $(s,i) \in L_n$ and
  $M=f(s,i)$. By definition of $f$ it is clear that the number of zeros
  in $M$ corresponds to the number of $j$ for which $s_j=i$, and hence
  $|M|_0 = |s^{-1}(i)|$.  Every $(s,i) \in L_n$ can be uniquely
  identified with a pair $(s',(t,j))$, where $s=s' \oplus t$, $t$ is the
  last strong summand of $s$, and $j=i-|s'|$.  In other words, $s'$ is
  the score sequence $s$ with its last strong summand $t$ removed, and
  $j$ is the distinguished position relative to $t$ rather than
  $s$. Assuming $k=|s'|$, we have $s' \in S_k$ and
  $(t,j) \in \ptT_{n-k}$ with $|t^{-1}(j)|=|M|_0$.  Consequently,
  $\EGZ(x,t) = S(x)\ptT(x,t)$ as claimed.
\end{proof}

We are now finally in a position to prove our main result.

\begin{proof}[Proof of Theorem~\ref{main-result}]
  We will give a combinatorial proof of the power series identity
  $\ptS(x,t) = \EGZ(x,t)S(x)$ by showing that the mapping
  \[
  \Phi: \ptS_n \to \bigcup_{k=1}^n \EGZ_k\times S_{n-k}
  \]
  defined in Section~\ref{section-main-thm} is a bijection.  Recall that
  $\Phi(s,i)=(\mu(u-i),v)$ is defined by writing $s$ in terms of its
  strong summands, say $s=t_1\oplus t_2\oplus\cdots\oplus t_k$, and then
  splitting the score sequence $s=u \oplus v$, where
  $u=t_1\oplus\cdots\oplus t_j$ and $j$ is the smallest index such that
  $i < |t_1\oplus\cdots\oplus t_j|$, and $v$ is the remainder. Let $L_n$
  be the set defined in Lemma~\ref{last-strong-component}. Note that
  $(u,i)\in L_{|u|}$. In fact, it is easy to see that
  $(s,i)\mapsto \bigl((u,i), v\bigr)$ is a bijection from $\ptS_n$ to
  $\cup_{k=1}^n L_k\times S_{n-k}$. Moreover, by
  Lemma~\ref{last-strong-component}, $(u,i)\mapsto\mu(u-i)$ is a
  bijection from $L_k$ to $\EGZ_k$, and hence $\Phi$ is a bijection as
  well. Finally, $(u,i)\mapsto\mu(u-i)$ has the property
  that $|\mu(u-i)|_0 = |u^{-1}(i)|$ and so $\ptS(x,t) = \EGZ(x,t)S(x)$,
  as claimed.
\end{proof}

\begin{corollary}\label{catalan-conundrum}
  We have
  \[\vptS(x) = xC(x)S(x),
  \]
  where $C(x)$ is the generating function for the Catalan numbers.
\end{corollary}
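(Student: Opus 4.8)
The plan is to reduce everything to the main theorem. Since $\ptS(x,t)=\EGZ(x,t)S(x)$ by Theorem~\ref{main-result} and $\vptS(x)=\ptS(x,1)-\ptS(x,0)$ by definition, I would immediately factor out $S(x)$ to obtain
\[
  \vptS(x) = \bigl(\EGZ(x,1)-\EGZ(x,0)\bigr)S(x).
\]
Thus the entire corollary reduces to the single identity $\EGZ(x,1)-\EGZ(x,0)=xC(x)$, which I would prove coefficientwise.

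Next I would unwind the two evaluations combinatorially. Setting $t=1$ in $\EGZ_n(t)=\sum_{M\in\EGZ_n}t^{|M|_0}$ gives $|\EGZ_n|$, while $t=0$ records exactly those $M$ with $|M|_0=0$. Hence the coefficient of $x^n$ in $\EGZ(x,1)-\EGZ(x,0)$ is the number of multisets $M\in\EGZ_n$ containing at least one zero. The key reduction is a ``remove a zero'' bijection: deleting one copy of $0$ from such an $M$ produces a size-$(n-1)$ multiset over $\ZZ_n$ whose elementwise sum is unchanged, hence still congruent to $\binom{n}{2}$ modulo $n$; conversely, adjoining a $0$ to any such multiset is the inverse. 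So I am left to count size-$(n-1)$ multisets over $\ZZ_n$ with sum $\equiv\binom{n}{2}\pmod n$.

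The crux---and the step I expect to carry the real content---is to show that this count is exactly the Catalan number $C_{n-1}=\tfrac1n\binom{2n-2}{n-1}$. For this I would argue that the residue of the sum is equidistributed. Consider the order-$n$ bijection on size-$(n-1)$ multisets over $\ZZ_n$ that adds $1$ (modulo $n$) to every element; it shifts the sum by $n-1\equiv -1\pmod n$. Since $\gcd(n-1,n)=1$, iterating this map carries each residue class bijectively onto every other, so all $n$ residue classes are equinumerous. As the total number of size-$(n-1)$ multisets over $\ZZ_n$ is $\binom{2n-2}{n-1}$, each class has size $\tfrac1n\binom{2n-2}{n-1}=C_{n-1}$. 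Therefore $\EGZ(x,1)-\EGZ(x,0)=\sum_{n\ge1}C_{n-1}x^n=xC(x)$, and multiplying by $S(x)$ finishes the proof. The only thing to watch is the boundary bookkeeping---that the ``remove a zero'' map is genuinely a bijection onto \emph{all} size-$(n-1)$ multisets with the correct sum, with no range obstruction---but the shift argument then handles the count cleanly.
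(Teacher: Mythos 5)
Your proposal is correct, and its skeleton is exactly the paper's: factor out $S(x)$ via Theorem~\ref{main-result}, reduce to the identity $\EGZ(x,1)-\EGZ(x,0)=xC(x)$, interpret the coefficient of $x^n$ as the number of $M\in\EGZ_n$ with $|M|_0\geq 1$, and apply the remove-a-zero bijection onto size-$(n-1)$ multisets over $\ZZ_n$ with sum $\equiv\binom{n}{2}\pmod{n}$. The one place you diverge is the last step: the paper simply cites Stanley (EC2, Exercise 6.19jjj) for the fact that there are $C_{n-1}$ such multisets, whereas you prove it from scratch by equidistribution --- the map adding $1$ to every element is a bijection on size-$(n-1)$ multisets over $\ZZ_n$ shifting the sum by $n-1\equiv -1\pmod n$, and since $-1$ generates $\ZZ_n$ its iterates carry each residue class of the sum onto every other, so each class has size $\frac{1}{n}\binom{2n-2}{n-1}=C_{n-1}$. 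Your argument is sound (the total count $\binom{2n-2}{n-1}$ of size-$(n-1)$ multisets from an $n$-element alphabet is right, and the remove-a-zero map has no range obstruction since the ambient alphabet stays $\ZZ_n$); what it buys is a self-contained proof of the cited exercise at the cost of a few extra lines, while the paper's citation keeps the corollary's proof to a short paragraph. In effect you have supplied the proof of the black-boxed lemma, which is a welcome addition rather than a different route.
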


\begin{proof}
  From Theorem~\ref{main-result} we get
  \[\vptS(x) = \ptS(x,1)-\ptS(x,0) = \bigl(\EGZ(x,1)-\EGZ(x,0)\bigr)S(x).
  \]
  It thus suffices to show that $\EGZ(x,1)-\EGZ(x,0)=xC(x)$. Note that
  the coefficient of $x^n$ in $\EGZ(x,1)-\EGZ(x,0)$ is the number of
  multisets of $\EGZ_n$ that contain at least one zero. On removing a
  single zero from such a multiset we are left with a multiset of size
  $n-1$ whose sum is $\binom{n}{2}$ modulo $n$. There are $C_{n-1}$ such
  multisets~\cite[ Exercise 6.19jjj]{EC2}.
\end{proof}

\begin{corollary}\label{exp-formula}
  We have
  \[ S(x) \,=\, \exp\Biggl(\,\sum_{n\geq 1} \frac{|\EGZ_n|}{n} x^n\Biggr).
  \]
\end{corollary}
\begin{proof}
  Since $\ptS(x,1)=xS'(x)$, by letting $t=1$ in the main theorem
  we have $xS'(x)=\EGZ(x,1)S(x)$, or, equivalently,
  $x\bigl(\log S(x)\bigr)'=\EGZ(x,1)$. Thus
  \[ \log S(x) = \int_0^x\EGZ(u,1)\frac{1}{u} \,du = \sum_{n\geq 1} |\EGZ_n|\frac{\,x^n}{n},
  \]
  as claimed.
\end{proof}

We end this section by comparing our result
(Corollary~\ref{exp-formula}) with earlier results on the enumeration of
the \emph{ordered} score sequences $(s_0,s_1,\dots,s_{n-1})$, also
called \emph{score vectors}. That is, if $G$ is a tournament on the
vertex set $\{v_0, v_1, \dots, v_{n-1}\}$, then $s_i$ is the out-degree
of $v_i$ in $G$. For instance, while there are only two score sequences
of length $3$, namely $(1,1,1)$ and $(1,2,3)$, there are 7 score vectors
of length $3$: the vector $(1,1,1)$ together with the 6 permutations of
$(1,2,3)$.

Stanley and Zaslavsky~\cite{Stanley1980} have shown that the number of
score vectors of length $n$ equals the number of (labeled) forests on
$n$ nodes. A combinatorial proof was subsequently given by Kleitman and
Winston~\cite{Kleitman1981}. Cayley~\cite{Cayley2009} famously gave the
formula $n^{n-2}$ for the number of trees on $n$ nodes. From the theory
of exponential generating functions it immediately follows that
\[
  \exp\Biggl(\,\sum_{n\geq 1}n^{n-2}\frac{x^n}{n!}\,\Biggr)
\]
is the exponential generating function of forests, and thus also of score
vectors.

\section{The number of score sequences}

If two power series $A(x)=1+\sum_{n\geq 1}a_nx^n$ and
$B(x)=\sum_{n\geq 1}b_nx^n$ satisfy $xA'(x)/A(x) = B(x)$ and hence
$\log A(x) = \sum_{n\geq 1}b_nx^n/n$, then one readily obtains a closed
formula for $a_n$ by expanding and identifying coefficients in
$A(x) = \exp\bigl(b_1x^1/1\bigr)\exp\bigl(b_2x^2/2\bigr)\cdots$.
Applying this to the equation in Corollary~\ref{exp-formula} we arrive at
\begin{equation}
  |S_n|
  = \frac{1}{n!}\sum_{\pi\in \Sym(n)}\prod_{\ell\in C(\pi)}|\EGZ_{\ell}|,
\end{equation}
where $\Sym(n)$ is the symmetric group of degree $n$ and $C(\pi)$ encodes
the cycle type of $\pi$; i.e.\ there is an $\ell\in C(\pi)$ for each
$\ell$-cycle of $\pi$. While having the virtue of being closed, this
formula does not lend itself to quickly calculating $|S_n|$. For that
purpose the following recursion is better suited.

\begin{corollary}\label{recursion}
  For $n\geq 1$,
  \begin{align*}
    |S_n|
    &= \frac{1}{n}\sum_{k=1}^{n}|S_{n-k}||\EGZ_k| \\
    &= \frac{1}{n}\sum_{k=1}^{n}|S_{n-k}|
      \frac{1}{2k}\sum_{d | k}(-1)^{k-d}\varphi(k/d)\binom{2d}{d}.
  \end{align*}
\end{corollary}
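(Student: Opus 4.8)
The plan is to deduce the recursion as a routine coefficient comparison from Theorem~\ref{main-result}; the genuine combinatorial content has already been spent on the bijection $\Phi$ and on the cited evaluation of $|\EGZ_n|$, so here only bookkeeping remains. First I would specialize the main theorem at $t=1$ to obtain $\ptS(x,1)=\EGZ(x,1)S(x)$. The key observation is that putting $t=1$ collapses the statistic $(s,i)\mapsto|s^{-1}(i)|$, so $\ptS_n(1)=\sum_{(s,i)\in\ptS_n}1=|\ptS_n|$. Since $\ptS_n=S_n\times[0,n-1]$ by definition, this count is exactly $n|S_n|$, and hence $\ptS(x,1)=\sum_{n\geq1}n|S_n|x^n$. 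In the same way $\EGZ(x,1)=\sum_{k\geq1}|\EGZ_k|x^k$ and $S(x)=\sum_{m\geq0}|S_m|x^m$, where I would flag that the constant term of $S(x)$ is $|S_0|=1$ (consistent with $S(x)=(1-T(x))^{-1}$), since that term is what produces the $k=n$ summand below.

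Next I would compare coefficients of $x^n$ on the two sides of $\ptS(x,1)=\EGZ(x,1)S(x)$ for $n\geq1$. The right-hand side is a Cauchy product, so its coefficient of $x^n$ is $\sum_{k=1}^{n}|\EGZ_k||S_{n-k}|$; the sum begins at $k=1$ because $\EGZ(x,1)$ has no constant term, and the extreme term $k=n$ contributes $|\EGZ_n||S_0|=|\EGZ_n|$. Equating this with the left-hand coefficient $n|S_n|$ and dividing by $n$ gives the first displayed identity $|S_n|=\tfrac1n\sum_{k=1}^{n}|S_{n-k}||\EGZ_k|$.

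Finally, to reach the second form I would simply substitute the closed formula \eqref{EGZ-cardinality} for $|\EGZ_k|$ into the inner summand, replacing each $|\EGZ_k|$ by $\tfrac{1}{2k}\sum_{d\mid k}(-1)^{k-d}\varphi(k/d)\binom{2d}{d}$. I do not expect any real obstacle at this stage: the only points needing care are the specialization $\ptS_n(1)=n|S_n|$ and the constant-term bookkeeping for $S(x)$. The substantive difficulty lies entirely upstream—in the factorization of Theorem~\ref{main-result} (via the bijection $\Phi$) and in the Jovovi\'c--Alekseyev evaluation of $|\EGZ_n|$—both of which I am free to invoke here, so the corollary is essentially immediate.
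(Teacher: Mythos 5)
Your proof is correct and takes essentially the same route as the paper: setting $t=1$ in Theorem~\ref{main-result}, using $\ptS_n(1)=|\ptS_n|=n|S_n|$, extracting the coefficient of $x^n$ from the product $\EGZ(x,1)S(x)$, and substituting Equation~\eqref{EGZ-cardinality}. Your explicit bookkeeping (that $\EGZ(x,1)$ has no constant term while $S(x)$ has constant term $|S_0|=1$) is precisely what the paper leaves implicit in its one-paragraph derivation.
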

\begin{proof}
  Since $\ptS_n=S_n\times [0,n-1]$ there is a simple relation
  $|\ptS_n|=n|S_n|$ between the number of pointed score sequences and
  the number of (plain) score sequences. Thus the result immediately
  follows from identifying coefficients in the main theorem, with
  $t=1$, and using Equation~\ref{EGZ-cardinality} in
  Section~\ref{section-intro}.
\end{proof}

This allows us
to calculate all values of $|S_k|$ for $k \leq n$ in 
$\Theta(n^2)$
time, assuming constant time integer operations. This is an improvement
on earlier results by Narayana and Bent~\cite{NarayanaBent1964}. Their 
recursive formula can be implemented to find $|S_n|$ in 
$\Theta(n^3)$
time,
but no faster since their recursive function must always visit 
$\Theta(n^3)$
states to do so; to get all $|S_k|$ for $k \leq n$ takes 
$\Theta(n^4)$
time
due to lack of overlap in the states recursively visited for different $k$.

Since $S(x) = (1 - T(x))^{-1}$ 
this recursive computation method can be extended to
$|T_k|$. We can
rewrite the equation as 
$1 + T(x) = S(x) - (S(x) - 1)T(x)$
which gives
us the recursion
\[
  |T_n| = |S_n| - \sum_{i = 1}^{n - 1} |T_i| |S_{n-i}|.
\]
We first calculate the values $|S_k|$ and use this recursion
to calculate all the values $|T_k|$ for $k \leq n$ in
$\Theta(n^2)$
time. This
is the same method as used by Stockmeyer~\cite{Stockmeyer2022}, just
calculating the underlying $|S_k|$ faster which brings the total time complexity
down from
$\Theta(n^4)$
to
$\Theta(n^2)$.

\bibliographystyle{plain}
\bibliography{references}

\end{document}